\newcommand \N {\mathbb{N}}
\newcommand \R {\mathbb{R}}
\newcommand \Z {\mathbb{Z}}
\newcommand \Sc {\mathcal{S}}
\newcommand \HH {\mathbb{H}}
\newcommand \RR {\mathbb{R}}
\newcommand \Oh {\mathcal{O}}
\newcommand \la {\langle}
\newcommand \ra {\rangle}
\newcommand \D {\partial}
\newcommand \eps {\varepsilon}
\newcommand \Def {\stackrel{\textrm{def}}=}
\newcommand\CI {C^\infty}
\DeclareMathOperator \re {Re}
\DeclareMathOperator \im {Im}
\DeclareMathOperator \supp {supp}
\DeclareMathOperator \WF {WF}
\DeclareMathOperator \WFh {WF_{\textit{h}}}
\DeclareMathOperator \Op {Op}
\DeclareMathOperator \Id {Id}
\newtheorem{lem}{Lemma}
\newtheorem{thm}{Theorem}
\newtheorem{prop}[lem]{Proposition}
\theoremstyle{definition}
\newtheorem{rem}[lem]{Remark}
\numberwithin{equation}{section}
\numberwithin{lem}{section}
\numberwithin{Defn}{section}
\numberwithin{thm}{section}
\title
{Propagation through trapped sets and semiclassical resolvent estimates}
\author[Kiril Datchev]
{Kiril Datchev}
\address{Department of Mathematics, Massachusetts Institute of Technology, Cambridge, MA
02139-4397, U.S.A.}
\email{datchev@math.mit.edu}
\author[Andr\'as Vasy]
{Andr\'as Vasy}
\address{Department of Mathematics, Stanford University, Stanford, CA
94305-2125, U.S.A.}
\email{andras@math.stanford.edu}
\keywords{Resolvent estimates, trapping, propagation
of singularities}
\subjclass[2010]{58J47, 35L05}
\thanks{The first author is partially supported by a National Science Foundation postdoctoral fellowship, and the second author is partially supported by the National Science Foundation under
grant DMS-0801226, and a Chambers Fellowship from Stanford University. The authors are grateful for the hospitality of the Mathematical Sciences Research Institute, where part of this research was carried out.}
\date{May 25, 2012}
\begin{document}

\begin{abstract}
Motivated by the study of resolvent estimates in the presence of trapping, we prove a semiclassical propagation theorem in a neighborhood of a compact invariant subset of the bicharacteristic flow which is isolated in a suitable sense. Examples include a global trapped set and a single isolated periodic trajectory. This is applied to obtain microlocal resolvent estimates with no loss compared to the nontrapping setting.
\end{abstract}

\maketitle

\section{Introduction}

In this paper we study the following phenomenon: losses in high energy, i.e. semiclassical, resolvent estimates caused by trapping are removed if one truncates the resolvent (microlocally) away from the trapped set. Such results go back to work of Burq \cite{Burq:Lower} and Cardoso and Vodev \cite{Cardoso-Vodev:Uniform}. Our result is based on a microlocal propagation estimate and is able to distinguish between different components of the trapped set. As an illustration, consider the following example:

Let $(X,g)$ be the catenoid or the hyperbolic cylinder, i.e. the quotient of the hyperbolic upper half plane by $\la z \mapsto 2z\ra$. Let $P = h^2\Delta_g - 1$. Let $R_h(\lambda) = (P - \lambda)^{-1}$. We are interested in behavior of this resolvent family when $\re \lambda = 0$, $\im \lambda \to 0^+$ (this corresponds to energy $1/h^2$ for the non-semiclassical $\Delta_g$).  It is well known that the limiting behavior of the resolvent is closely connected to dynamics of the geodesic flow on the energy surface, i.e. on the unit cosphere bundle. In this case the trapped, or nonwandering, set consists of two periodic orbits whose projections to $X$ are the same, see Figure~\ref{f:intro}. Denote these two orbits by $\Gamma^1$ and $\Gamma^2$, and denote by $\Gamma^1_\pm$ the set of $\rho \in S^*X$ such that the lifted geodesic through $\rho$ tends to $\Gamma^1$ as $t \to \mp \infty$, and define $\Gamma^2_\pm$ similarly. Let $u= R_h(\lambda)f$ with $\lambda$ as above. If $f$ is $\Oh(1)$, then $u$ is $\Oh(|\log h|\, h^{-1})$ by a result of Christianson \cite{c07,c08}. A consequence of our main result is that if in addition $f$ vanishes microlocally near $\Gamma^1$ but not near $\Gamma^2$, then $u$ is actually $\Oh(h^{-1})$ on $T^*X \setminus (\Gamma^1 \cup \Gamma^2_+)$. If we assume that $f$ vanishes microlocally near $\Gamma^2$ as well, then a result of Cardoso and Vodev \cite{Cardoso-Vodev:Uniform} (following earlier work of Burq \cite{Burq:Lower}) implies that $u$ is $\Oh(h^{-1})$ on $T^*X \setminus(\Gamma^1 \cup \Gamma^2)$. The novelty in this example is that we keep this improvement on $\Gamma^1_+$ even when $f$ is nontrivial on $\Gamma^2$. 

\begin{figure}
\includegraphics[width=16cm]{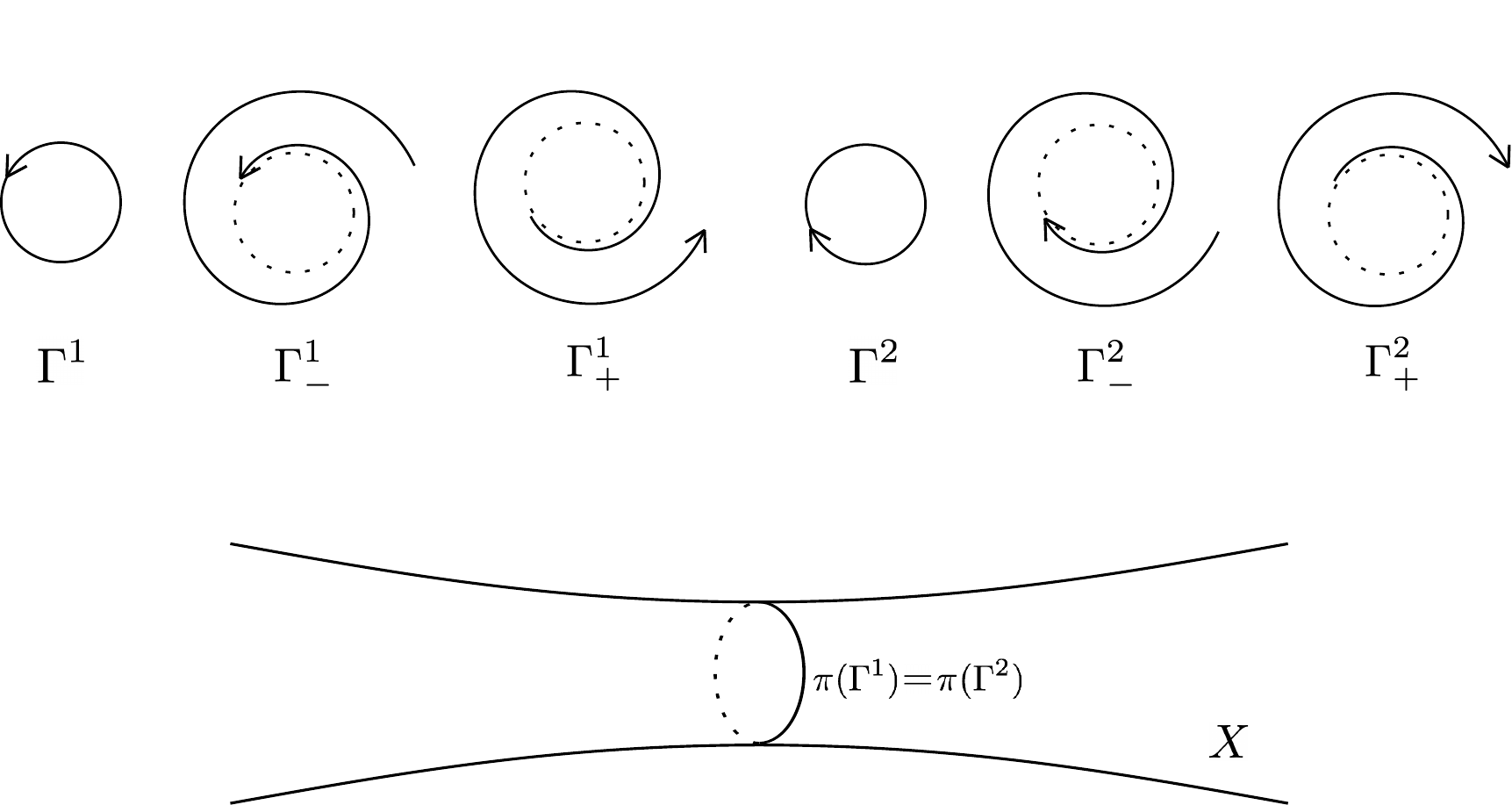}
\caption{The two closed orbits $\Gamma^1$ and $\Gamma^2$ are obtained by lifting the geodesic at the neck of the catenoid or hyperbolic cylinder to $S^*X$. The sets $\Gamma^j_\pm$, which by definition contain the $\Gamma^j$, each consist of the infinitely many trajectories spiraling towards $\Gamma^j$ as $t \to \mp \infty$. If $u = R_h(\lambda)f $ and $f$ is $\Oh(1)$ then $u$ is $\Oh(|\log h| \, h^{-1})$ globally by \cite{c07,c08}. If $f$ vanishes microlocally near $\Gamma^1\cup\Gamma^2$ then $u$ is actually $\Oh(h^{-1})$ off of $\Gamma^1 \cup \Gamma^2$ by \cite{Cardoso-Vodev:Uniform}. If $f$ vanishes microlocally only near $\Gamma^1$, we find that $u$ is actually $\Oh(h^{-1})$ off of $\Gamma^1 \cup \Gamma^2_+$.}\label{f:intro}
\end{figure}

More generally, let $(X,g)$ be a complete Riemannian manifold, $P=h^2\Delta_g+V-1$ a semiclassical Schr\"odinger operator, $V\in\CI(X;\RR)$ bounded, $h\in(0,1)$. We say that a bicharacteristic (by which we always mean a bicharacteristic in $\Sigma =p^{-1}(I)$ for some $I \subset \R$ compact) is \textit{backward nontrapped} if the flowout of any point on it is disjoint from any compact set for sufficiently negative time (this definition is generalized in \S\ref{s:defnot}). Suppose the resolvent family $R_h(\lambda)$ for $\lambda \in D \subset \{\re \lambda \in I, \im \lambda \ge -\Oh(h^{\infty})\}$, where $D$ is any subset, is polynomially bounded in $h$ over compact subsets of $T^*X$. This means that for any $a, b \in C_0^\infty(T^*X)$ there is $k \in \N$ such that $\|\Op(a)R_h(\lambda)\Op(b)\|_{L^2 \to L^2} \le h^{-k}$. Suppose further that $R_h(\lambda)$ is \textit{semiclassically outgoing} with a loss of $h^{-1}$ at backward nontrapped points in the following sense: if $u = R_h(\lambda)f$  and $\rho$ lies on a backward nontrapped bicharacteristic, and if $f$ is $\Oh(1)$ on the backward flowout of $\rho$, then $u$ is $\Oh(h^{-1})$ at $\rho$. Suppose also that $\tilde \Gamma$, the trapped set (the set of precompact bicharacteristics), is compact.

The following theorem generalizes the example at the beginning of the introduction:

\begin{thm}\label{t:cor} Let $(X,g)$, $P$ and $\lambda$ be as in the above paragraph. Let $a \in C_0^\infty(T^*X)$ have support disjoint from $\tilde \Gamma$, the trapped set. Let $b \in C_0^\infty(T^*X)$ have support disjoint from all connected components of $\tilde \Gamma$ intersecting the closure of the backward bicharacteristic flowout of $\supp a$.  Then nontrapping estimates hold:
\begin{equation}\label{e:ann}\|\Op(a) R_h(\lambda)\Op(b)\|_{L^2(X) \to L^2(X)}  \le C h^{-1},\end{equation}
\end{thm}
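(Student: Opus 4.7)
Fix $g \in L^2(X)$, set $u = R_h(\lambda)\Op(b)g$ and $f = \Op(b)g$, so that $(P-\lambda)u = f$, $\|f\|_{L^2}\le C\|g\|_{L^2}$, and $\WFh(f)\subset \supp b$. By a partition of unity on $\supp a$, it suffices to establish $\Oh(h^{-1}\|g\|_{L^2})$ microlocal bounds on $u$ at each $\rho \in \supp a$; summing these and using the polynomial boundedness hypothesis on $R_h(\lambda)$ to absorb $\Oh(h^\infty)$ remainders produced by microlocal cutoffs then yields \eqref{e:ann}. The plan is to combine the semiclassical propagation theorem of this paper (which is lossless in a neighborhood of any isolated compact invariant subset of the flow) with the semiclassically outgoing hypothesis (supplying base bounds on backward nontrapped bicharacteristics) and standard semiclassical propagation of singularities (bridging between the two, with the permitted $h^{-1}$ loss).

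Fix $\rho \in \supp a$ and trace the bicharacteristic $\gamma$ through $\rho$ backward in time. Since $\rho \notin \tilde\Gamma$, one of two cases occurs: either $\gamma$ is backward nontrapped, or its backward flowout is precompact, in which case its $\alpha$-limit set is a compact, connected, invariant subset of $\tilde\Gamma$, hence contained in a single connected component $\Gamma_{k(\rho)}$ of $\tilde\Gamma$. In the first case the outgoing hypothesis directly gives $u = \Oh(h^{-1})$ at $\rho$. In the second case, $\supp b$ is disjoint from $\Gamma_{k(\rho)}$ by hypothesis, so $f$ is microlocally $\Oh(h^\infty)$ near $\Gamma_{k(\rho)}$; the main theorem then transfers $\Oh(h^{-1})$ bounds from a microlocal neighborhood of the incoming side of $\Gamma_{k(\rho)}$ to a neighborhood of the outgoing side, which contains $\rho$. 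The input bounds on the incoming side are obtained by propagating forward (with loss $h^{-1}$) from base points supplied by the outgoing hypothesis, iterating the main theorem through any further trapped components at which those backward flowouts accumulate.

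The iteration terminates in finitely many steps, since $\tilde\Gamma$ is compact and its components are isolated in the sense of the paper, so only finitely many of them meet $\overline{B^-(\supp a)}$, and by hypothesis $\supp b$ is disjoint from each, ensuring that at every stage the forcing $f$ is microlocally trivial near the component being propagated through. The main obstacle will be the careful bookkeeping of nested microlocal neighborhoods in the iteration: at each step one must verify that the isolation hypothesis of the main theorem applies to the relevant component, and that the cutoffs, neighborhoods, and constants chain together compatibly so that the $\Oh(h^{-1})$ bound is preserved all the way from the outgoing-hypothesis base points to $\rho$.
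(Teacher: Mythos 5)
Your overall strategy---semiclassically outgoing hypothesis for backward nontrapped points, Theorem~\ref{t:main} for backward trapped points, propagation of singularities to bridge, Banach--Steinhaus at the end---matches the paper's, and your case split at $\rho \in \supp a$ is essentially the paper's three cases. But the iteration through chained trapped components that you envisage is both unnecessary for Theorem~\ref{t:cor} and left incomplete; the paper's proof is a \emph{single} application of Theorem~\ref{t:main}, with $\Gamma$ taken to be the union of all connected components of $\tilde\Gamma$ meeting the closure of the backward flowout of $\supp a$ and with $V\subset W$ chosen so that $\overline{W}$ is disjoint from every other component of $\tilde\Gamma$. With that choice the $\Oh(h^{-1})$ input on $W\cap\Gamma_-\setminus\overline V$ required by Theorem~\ref{t:main} is supplied directly by the outgoing hypothesis: any such point is forward trapped (its forward trajectory tends to $\Gamma$) and lies in $W$ but outside $\overline V$; if it were also backward trapped it would lie in $\tilde\Gamma\cap W\subset\Gamma\subset V$, a contradiction, so it is backward nontrapped. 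No chaining is needed.

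The observation you miss is that the chain you worry about cannot occur under the hypotheses of Theorem~\ref{t:cor}. If a point $\rho'$ in the backward flowout of a component $\Gamma^k$ (forward trajectory tending to $\Gamma^k$) were also backward trapped, then the full trajectory through $\rho'$ would be precompact and invariant, so its closure is a connected subset of $\tilde\Gamma$ meeting the component $\Gamma^k$; hence that closure, including $\rho'$ and its $\alpha$-limit set, is contained in $\Gamma^k$ itself. There is no distinct ``further trapped component'' to iterate to. (Genuine iteration of Theorem~\ref{t:main} through a sequence of invariant sets is needed only when $\Gamma$ is a \emph{proper} subset of a connected component of the trapped set, which is the situation of \S\ref{s:contrived}, not of Theorem~\ref{t:cor}.) This also makes your finiteness claim moot, which is just as well: you do not establish it, it is not obvious---components of a compact set can accumulate---and the paper sidesteps the issue entirely by applying Theorem~\ref{t:main} once to $\Gamma$ as a single compact invariant set without enumerating its components.
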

Here $\Op$ denotes the semiclassical quantization: see \S\ref{s:defnot}. Since the projection of the cotangent bundle to the base $\pi \colon T^*X \to X$ is a proper map when restricted to $\Sigma$, the condition that $a, b \in C_0^\infty(T^*X)$ can be weakened using microlocal elliptic regularity. Indeed, we may replace that condition with the condition that  $a, b \in C^\infty(T^*X)$ are bounded together with all derivatives, and that $\pi \supp a$ and $\pi \supp b$ are compact.

Note that if $X$ has suitable ends at infinity (for instance, asymptotically conic or hyperbolic), then the semiclassically outgoing assumption is satisfied (see \S\ref{s:outgoing} below),  we can use resolvent gluing to weaken the condition that $\pi \supp a$ and $\pi \supp b$ are compact to a decay condition, leading to the following theorem. 

\begin{thm}\label{t:first} Let $(X,g)$ be a complete Riemannian manifold which is either asymptotically conic or asymptotically hyperbolic and even in the sense of \S\ref{s:defnot}, let $\Delta_g$ be the nonnegative Laplace-Beltrami operator on $X$, let $V \in C_0^\infty(X)$, and fix $E>0$. Suppose that for any $\chi_0 \in C_0^\infty(X)$ there exist $C_0, k, h_0 > 0$ such that for any $\eps > 0$, $h \in (0,h_0]$ we have
\begin{equation}\label{e:polybd}
\|\chi_0(h^2\Delta_g + V - E - i\eps)^{-1}\chi_0\|_{L^2(X) \to L^2(X)} \le C_0 h^{-k}.
\end{equation}
Let $K_E \subset T^*X$ be the set of trapped bicharacteristics at energy $E$, and suppose that $a\in C_0^\infty(T^*X)$ is identically $1$ near $K_E$. Then there exist $C_1, h_1>0$ such that for any $\eps > 0$, $h \in (0,h_1]$ we have the following nontrapping estimate:
\begin{equation}\label{e:firstconc}
\|\la r \ra^{-1/2 - \delta}(1 - \Op(a))(h^2\Delta_g + V - E - i\eps)^{-1}(1 - \Op(a))\la r \ra^{-1/2 - \delta}\|_{L^2(X) \to L^2(X)} \le C_1 h^{-1}.
\end{equation}
\end{thm}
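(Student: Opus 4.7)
The plan is to deduce Theorem~\ref{t:first} from Theorem~\ref{t:cor} by a resolvent-gluing argument. I would first verify the hypotheses of Theorem~\ref{t:cor}. The polynomial bound on microlocal cutoff resolvents is immediate from \eqref{e:polybd}: for any $a', b' \in C_0^\infty(T^*X)$, choosing $\chi_0 \in C_0^\infty(X)$ equal to $1$ on $\pi(\supp a' \cup \supp b')$ and inserting it between $\Op(a')$ and $\Op(b')$ in $\Op(a') R_h \Op(b')$ changes the operator only by $\Oh(h^\infty)$ via microlocal elliptic regularity. The semiclassically outgoing property at backward nontrapped points is the content of \S\ref{s:outgoing}, which treats the asymptotically conic and asymptotically hyperbolic even settings using the scattering and $0$-calculus structure of the resolvent at infinity.

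Next I would set up the gluing. Let $(X_0, g_0)$ be a model manifold agreeing with $(X, g)$ outside a compact set, with $V \equiv 0$ outside that compact set and with no trapping at energy $E$; set $R_{0,h} = (h^2\Delta_{g_0} - E - i\eps)^{-1}$. In both the asymptotically conic and asymptotically hyperbolic settings one has the classical weighted nontrapping bound $\|\la r\ra^{-1/2-\delta} R_{0,h} \la r\ra^{-1/2-\delta}\|_{L^2 \to L^2} = \Oh(h^{-1})$. Choose $\chi_1, \chi_2 \in C_0^\infty(X)$ both equal to $1$ on a large compact $K$ containing $\pi(K_E) \cup \supp V$, with $\chi_2 \equiv 1$ on $\supp \chi_1$. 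Form the parametrix $F = \chi_2 R_h \chi_1 + R_{0,h}(1 - \chi_1)$, which satisfies $(h^2\Delta_g + V - E - i\eps) F = \Id + A$, where $A$ is a sum of commutator terms $[h^2\Delta_g, \chi_j] R_\bullet$; these commutators are supported in $\supp d\chi_j$, a compact subset of $X$ disjoint from $\pi(K_E)$. A Neumann inversion of $\Id + A$, using \eqref{e:polybd} to absorb the polynomial loss of $R_h$ through the commutators, then expresses $R_h$ as a linear combination of $\chi_2 R_h \chi_1$, $R_{0,h}$, and further error terms of the same type.

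Sandwiching this expression by $\la r\ra^{-1/2-\delta}(1 - \Op(a))$ on both sides, the $R_{0,h}$ contribution is bounded directly by the model nontrapping estimate. The $\chi_2 R_h \chi_1$ contribution is precisely of the form to which Theorem~\ref{t:cor} applies: the full symbols of $(1-\Op(a))\chi_2$ and $\chi_1(1-\Op(a))$ are compactly supported in $T^*X$ and vanish in a neighborhood of $K_E = \tilde\Gamma$, so the disjoint-supports hypothesis of Theorem~\ref{t:cor} is trivially satisfied (every component of $\tilde\Gamma$ is excluded). The commutator error terms are microlocally compactly supported and away from $K_E$, hence controlled by the same combination. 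The main obstacle is organizing the gluing so that every error term lies in a phase-space region where either Theorem~\ref{t:cor} or the model nontrapping estimate applies, and carrying out the verification of semiclassical outgoingness in \S\ref{s:outgoing} in the asymptotically hyperbolic (even) case, which requires identifying the correct microlocal model at the boundary and tracking the outgoing condition along bicharacteristics out to infinity; the asymptotically conic case is by now standard.
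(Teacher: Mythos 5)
Your overall strategy---verify the hypotheses of Theorem~\ref{t:cor} from \eqref{e:polybd} and \S\ref{s:outgoing}, then glue the resulting estimate near the trapped set to a nontrapping estimate near infinity---is the same as the paper's, and your first paragraph is fine. The gap is in the gluing step.

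You propose to remove the error $A$ by ``a Neumann inversion of $\Id+A$.'' This cannot work as stated: $A$ contains a factor of $R_h$, which by \eqref{e:polybd} only satisfies an $\Oh(h^{-k})$ bound, while the commutator prefactor $[h^2\Delta_g,\chi_j]$ is only $\Oh(h)$, so $\|A\|=\Oh(h^{1-k})$ is large whenever $k>1$. There is no smallness to power a Neumann series, and your remark about ``using \eqref{e:polybd} to absorb the polynomial loss'' does not supply it---\eqref{e:polybd} is the \emph{source} of the loss, not a way to absorb it. What actually kills the error in the paper is not smallness of $A$ but the gluing lemma \cite[Lemma 3.1]{DaVa}: with the staggered cutoffs $\chi_0(x-1)\,(h^2\Delta_g-\lambda)^{-1}\,\chi_0(x)$ and $\chi_1(x+1)\,R_h\,\chi_1(x)$, chosen using the convexity of the boundary-defining function $x$ in the collar, one has $\|A_0A_1\|=\Oh(h^\infty)$ by a propagation-of-singularities/non-return argument. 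Since also $A_0^2=A_1^2=0$, one needs only a \emph{finite} iteration (one extra step) to make the remainder $\Oh(h^\infty)$. That lemma is the nontrivial core of the gluing, and your outline omits it. Two smaller departures worth noting: (a) the paper glues against the free resolvent $(h^2\Delta_g-\lambda)^{-1}$ on $X$ itself, controlled directly by the Cardoso--Vodev estimate \eqref{e:cv} (which holds regardless of trapping, since the cutoffs live away from a large compact set), rather than against a separate nontrapping model $(X_0,g_0)$; (b) your parametrix $F=\chi_2R_h\chi_1+R_{0,h}(1-\chi_1)$ lacks a left cutoff on $R_{0,h}$, so $(P-\lambda)F$ produces an error term of the form $(P-P_0)R_{0,h}(1-\chi_1)$ that is \emph{not} a commutator---the paper's staggered cutoffs are arranged precisely so that every error term is a clean commutator supported in the collar, which is what the non-return lemma requires.
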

Here by bicharacteristics at energy $E$ we mean integral curves in $p^{-1}(E)$ of the Hamiltonian vector field $H_p$ of the Hamiltonian $p = |\xi|^2 + V(x)$, and the trapped ones are those which remain in a compact set for all time. We use the notation $r = r(z) = d_g(z,z_0)$, where $d_g$ is the distance function on $X$ induced by $g$ and $z_0 \in X$ is fixed but arbitrary.

Such results were first obtained by Burq \cite{Burq:Lower}, and were later refined by Cardoso and Vodev \cite{Cardoso-Vodev:Uniform}. The improvement here is that to obtain the nontrapping bound the only condition on that cutoffs is that they vanish microlocally near $K_E$ (while in those papers the cutoffs are functions on the base manifold, and are required to vanish on a large compact set whose size is not effectively controlled), but the assumption \eqref{e:polybd} is not needed in \cite{Burq:Lower, Cardoso-Vodev:Uniform}. 

The assumption \eqref{e:polybd} is not true in general. Indeed, when there is elliptic (stable) trapping we have instead $\limsup_{h \to 0}\|\chi_0(h^2\Delta_g + V - E - i\eps)^{-1}\chi_0\|_{L^2(X) \to L^2(X)} \ge e^{1/(Ch)}$ (this has been well known for a long time -- see e.g. \cite{ralston} for an example and \cite{Bony-Burq-Ramond} for a recent introduction to the subject of semiclassical resolvent estimates). Nonetheless, \eqref{e:polybd} is satisfied for many hyperbolic trapped geometries, including those studied in \cite{Nonnenmacher-Zworski:Quantum,  wz}. See \cite[Theorem 6.1]{DaVa} for \eqref{e:polybd} in the asymptotically hyperbolic case, and see \cite{d} and \cite[Corollary 1]{wz} for the asymptotically conic case. Bony and Petkov \cite{Bony-Petkov:Resolvent} prove \eqref{e:polybd} for a general ``black box'' perturbation of the Laplacian in $\R^n$ assuming only that there is a resonance-free strip, and it is likely that this condition suffices for asymptotically conic or hyperbolic manifolds as well. It is an open problem to find the optimal general bound implied by a resonance free strip, or to find assumptions under which one has a polynomial bound \eqref{e:polybd} but no resonance free strip.

We remark that, in the setting of \cite{Nonnenmacher-Zworski:Quantum,  wz},  \eqref{e:polybd} holds with $C_0h^{-k}$ replaced by $C_0(\log h^{-1})h^{-1}$, and so the improvement in our result is only of a factor of $\log (1/h)$.  On the other hand, in \cite{Bony-Burq-Ramond}, Bony, Burq and Ramond prove that for $P$ a semiclassical Schr\"odinger operator on $\R^n$, the presence of a single trapped trajectory implies that
\[\sup_{\lambda \in [-\eps,\eps]} \|\chi(P-\lambda)^{-1}\chi\| \ge \frac{\log(1/h)}{Ch},\]
provided $\chi \in C_0^\infty(X)$ is $1$ on the projection of the trapped set, so in this case (and probably in general) the improvement in Theorem \ref{t:cor} is of no less than a factor of $\log(1/h)$. In \cite{cw}, Christianson and Wunsch give some examples of surfaces of revolution on which a resolvent estimate holds with a bound $h^{-k}$ (but not $C_0(\log h^{-1})h^{-1}$).

We actually prove our main theorem in the following still more general setting. Suppose $X$ is a manifold, $P \in \Psi^{m,0}(X)$ a self adjoint, order $m>0$, semiclassical pseudodifferential operator on $X$, with principal symbol $p$. For $I \subset \R$ compact and fixed, denote the characteristic set by $\Sigma=p^{-1}(I)$, and suppose that the projection to the base, $\pi\colon\Sigma \to X$, is proper (it is sufficient, for example, to have $p$ classically elliptic). Suppose that $\Gamma\Subset T^*X$ is invariant under the bicharacteristic flow in $\Sigma$. Define the {\em forward, resp.\ backward flowout} $\Gamma_+$, resp.\ $\Gamma_-$, of $\Gamma$ as the set of points $\rho$ in the characteristic set, $\Sigma$, from which the backward, resp.\ forward bicharacteristic segments tend to $\Gamma$, i.e.\ for any neighborhood $O$ of $\Gamma$ there exists $T>0$ such that $-t\geq T$, resp.\ $t\geq T$, implies $\gamma(t)\in O$, where $\gamma$ is the bicharacteristic with $\gamma(0)=\rho$. Here we think of $\Gamma$ as the trapped set or as part of the trapped set, hence points in $\Gamma_-$, resp.\ $\Gamma_+$ are backward, resp.\ forward, trapped, explaining the notation. Suppose $V$, $W$ are neighborhoods of $\Gamma$ with $\overline{V}\subset W$, $\overline{W}$ compact. Suppose also that
\begin{equation}\label{e:dynamicassumption}\begin{split}&\textrm{If $\rho\in W \setminus \Gamma_+$, resp. $\rho\in W \setminus \Gamma_-$,}\\ \textrm{then the backward, }& \textrm{resp. forward bicharacteristic from $\rho$ intersects $W\setminus \overline{V}$.}\end{split}
\end{equation}
The main result of the paper, from which the other results follow, is the following:

\begin{thm}\label{t:main}
Suppose that $u$ is a polynomially bounded family (in $h$) of distributions with $(P-\lambda)u=f$, $\re \lambda \in I$ and $\im \lambda \ge -\Oh(h^\infty)$. Suppose $f$ is $\Oh(1)$ in $L^2$ microlocally on $W$, $\WFh(f)\cap \overline{V}=\emptyset$, and $u$ is in $\Oh(h^{-1})$ microlocally on $W \cap \Gamma_- \setminus \overline{V}$, then $u$ is $\Oh(h^{-1})$ microlocally on $W\cap \Gamma_+ \setminus \Gamma$.
\end{thm}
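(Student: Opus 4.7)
The plan is to prove Theorem~\ref{t:main} by a semiclassical positive commutator estimate that transfers the hypothesized $\Oh(h^{-1})$ bound on $W\cap\Gamma_-\setminus\overline V$ to each target point $\rho_0\in W\cap\Gamma_+\setminus\Gamma$. A direct propagation along a single bicharacteristic cannot reach $\rho_0$: its backward orbit is trapped, approaching $\Gamma$ as $t\to-\infty$ without ever entering $\Gamma_-\setminus\Gamma$. The transfer from the controlled region to the target must instead be arranged at the symbol level, exploiting the fact that $\Gamma_+$ and $\Gamma_-$ both accumulate on $\Gamma$.

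The technical heart of the argument is the construction of an escape function adapted to $\rho_0$: a nonnegative $a\in C_0^\infty(T^*X)$ supported in $W$, elliptic at $\rho_0$, whose Hamilton derivative admits a decomposition
\begin{equation*}
H_p a = -c_1^2 + c_0^2 + r,
\end{equation*}
where $c_1$ is elliptic at $\rho_0$, $\supp c_0\subset W\cap\Gamma_-\setminus\overline V$ (the region controlled by hypothesis), and $r$ is supported outside $\overline V$, hence disjoint from $\WFh(f)$, so that $f$ can be used directly on $\supp r$ via elliptic regularity. I would build $a$ by flowing a small bump at $\rho_0$ backward along $H_p$ inside $W$ and multiplying by a time cutoff whose derivative along the flow generates the $c_0^2$ term at the far end of the backward flowout. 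The isolation hypothesis~\eqref{e:dynamicassumption} is decisive: for any $\rho\in W\setminus\Gamma_+$ the backward orbit hits $W\setminus\overline V$ in finite (if arbitrarily long) time, giving the cutoff a well-defined support; and because backward orbits terminating at $\rho_0$ must pass arbitrarily close to $\Gamma$, and hence to $\Gamma_-$, the far end of the flowout can be arranged to lie in $W\cap\Gamma_-\setminus\overline V$. Encoding this global dynamical geometry into a smooth local symbol, especially handling the divergence of escape times as one approaches the trapped set, is the main obstacle.

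With $A=\Op(a)$ in hand, the commutator identity combined with $\sigma_h(\tfrac{i}{h}[P,A])=H_p a$ and the sharp G\aa rding inequality applied to the $-c_1^2$ piece yields
\begin{equation*}
\|C_1 u\|^2 \le \|C_0 u\|^2 + |\langle\Op(r)u,u\rangle| + \tfrac{2}{h}|\langle f,Au\rangle| + \Oh(h^\infty)\|u\|^2 + \Oh(h)\|u\|^2,
\end{equation*}
where the first error absorbs the $\im\lambda$ contribution via $\im\lambda\ge-\Oh(h^\infty)$ and the polynomial bound on $u$. The hypothesis provides $\|C_0 u\|=\Oh(h^{-1})$; the $r$ term is controlled using $\|f\|=\Oh(1)$ microlocally on $W$ together with $\WFh(f)\cap\supp r=\emptyset$ via elliptic regularity; and Cauchy--Schwarz yields $\tfrac{1}{h}|\langle f,Au\rangle|\le\Oh(h^{-1})\|Au\|$. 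Since $u$ is only polynomially bounded a priori, a first application of the estimate improves the microlocal bound on $u$ at $\rho_0$, and iterating (feeding the improved bound on $Au$ back into the $f$ term) yields $\|C_1 u\|=\Oh(h^{-1})$. Varying $\rho_0$ over $W\cap\Gamma_+\setminus\Gamma$ and combining the resulting microlocal estimates gives the theorem.
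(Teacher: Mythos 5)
Your plan identifies the right general framework (a positive commutator with an escape function adapted to the trapped set, iterated to upgrade a polynomial bound to $\Oh(h^{-1})$), and you correctly recognize the central difficulty: the backward flowout of a point $\rho_0\in W\cap\Gamma_+\setminus\Gamma$ accumulates on $\Gamma$ and never reaches the controlled region $\Gamma_-\setminus\overline V$. But the resolution you propose has genuine gaps.

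First, the claim that the escape function can be arranged so that the error piece $c_0$ is supported in $W\cap\Gamma_-\setminus\overline V$ is exactly the gap, not a construction. Backward flowing a bump from $\rho_0$ produces a tube that tends to $\Gamma$, not to $\Gamma_-$; ``arbitrarily close to $\Gamma$, hence to $\Gamma_-$'' conflates proximity with membership. The paper does not try to do this. Instead, Lemma~\ref{l:nontrap} shows that for a sufficiently small neighborhood $U\subset V$ of $\Gamma$, any $\alpha\in U\setminus\Gamma_+$ has backward bicharacteristic entering a prescribed neighborhood of $(\Gamma_-\setminus\Gamma)\cap(\overline W\setminus V)$; combined with the hypothesis and propagation (Remark~\ref{r:nontrap}) this gives $\Oh(h^{-1})$ bounds on all of $U\setminus\Gamma_+$. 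The commutator error term $E$ therefore only needs to have microsupport \emph{away from $\Gamma_+$}; it does not need to land precisely on $\Gamma_-$. This seemingly small reformulation is the crux of the argument, and your proposal lacks it.

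Second, the $f$-term is handled incorrectly. You estimate $\tfrac{1}{h}|\langle f,Au\rangle|\le\Oh(h^{-1})\|Au\|$, but $\supp a$ must include a neighborhood of $\Gamma$, where $u$ is only polynomially bounded and no improvement is possible; hence $\|Au\|$ is only $\Oh(h^{-N})$ a priori, and the $f$-term is $\Oh(h^{-N-1})$, which swamps the main term and cannot be closed by iteration. The paper sidesteps this completely: the commutant $Q^*Q$ has $\supp q\subset U\subset V$ and $\WFh(f)\cap\overline V=\emptyset$, so $\langle Q^*Q(P-\lambda)u,u\rangle=\Oh(h^\infty)$. You cannot simultaneously keep $a$ supported inside $V$ (to kill the $f$-term) and have $c_0$ supported in $W\cap\Gamma_-\setminus\overline V$, which is disjoint from $\overline V$; this is an internal inconsistency in the proposed decomposition. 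Relatedly, ``$r$ supported outside $\overline V$, hence disjoint from $\WFh(f)$'' is backwards: $\WFh(f)$ is precisely what lives outside $\overline V$.

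Third, the paper builds $q$ to be identically $1$ on a neighborhood of $\Gamma$ (Lemma~\ref{l:escfunc}), so that the commutator vanishes microlocally there. This is essential, since $u$ at $\Gamma$ is only polynomially bounded and any nonvanishing commutator there would contribute an uncontrolled error. Your construction (a backward flow tube with a time cutoff) does not visibly produce this feature, and you do not address it. In short, your intuition is aimed in the right direction, but the two key devices the paper uses — the nontrapping Lemma~\ref{l:nontrap} to convert the $\Gamma_-$ hypothesis into $\Oh(h^{-1})$ control on $U\setminus\Gamma_+$, and an escape function constant near $\Gamma$ and supported inside $V$ — are both missing, and without them the proposed commutator estimate does not close.
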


Note that there is no conclusion on $u$ at $\Gamma$; typically it will be merely polynomially bounded. However, to obtain $\Oh(h^{-1})$ bounds for $u$ on $\Gamma_+$ we only needed to assume $\Oh(h^{-1})$ bounds for $u$ on $\Gamma_-$ and nowhere else.  Note also that by the propagation of singularities, if $u$ is $\Oh(h^{-1})$ at one point on any bicharacteristic, then it is such on the whole forward bicharacteristic. If $|\im \lambda| = \Oh(h^\infty)$ then the same is true for backward bicharacteristics.

In certain more complicated geometries it is possible to apply Theorem~\ref{t:main}  with $\Gamma$ a proper subset of $\tilde\Gamma$ which is not a connected component, allowing both $\supp a$ and $\supp b$ to intersect $\tilde \Gamma$. More specifically, when applying Theorem~\ref{t:main}, $W \cap \tilde \Gamma$ does not have to be a subset of $\Gamma$. This is because of the possibility of interesting dynamics within $\tilde \Gamma$, for example a trajectory which tends to different closed orbits as $t \to \pm \infty$, and thus is trapped. In this case $\Gamma$ could be one of the closed orbits. In \S\ref{s:contrived} we give an (admittedly contrived) example of this.

An interesting open question concerns the optimality of the condition $\im \lambda \ge - \Oh(h^\infty)$ in Theorem \ref{t:cor}. That some such condition is needed is suggested by the following result of Petkov and Stoyanov \cite[\S 4]{Petkov-Stoyanov:Singularities} for obstacle scattering on $\R^n$ with $n$ odd. They show that if the cutoff resolvent continues analytically to $\{|\re \lambda| \le E, \im \lambda \ge - C h \log(1/h)\}$, then a polynomial bound for $\|\chi(h^2\Delta_g - \lambda)^{-1}\chi\|$ in this range of $\lambda$, even for $\chi \in C_0^\infty(X)$ supported very far from the trapped set, implies the same bound for a general $\chi \in C_0^\infty(X)$, with possibly worse constant $C$. In other words, no improvement is possible for such a large range of $\lambda$. In fact, we have been informed by Vesselin Petkov that the assumption that the cutoff resolvent continues analytically to a logarithmic region can be replaced by the same assumption on a strip, using the same method.

The general idea of proving propagation estimates through trapped sets via commutator estimates is that near the trapped set $\Gamma$, where we cannot expect any improvement over a priori bounds, the commutator should vanish, which is in particular the case if the commutant is microlocally near $\Gamma$ a (possibly $h$-dependent) multiple of the identity operator. Such a commutant, which is in addition decreasing along the Hamilton flow elsewhere on the characteristic set, at least apart from backward non-trapped bicharacteristics (where one has $\Oh(h^{-1})$ a priori bounds), can indeed be constructed, see \S\ref{s:general}. In fact, under additional geometric assumptions, namely a certain convexity (which also plays a role in \cite{Burq:Lower, Cardoso-Vodev:Uniform}), one can use as commutants cutoff functions which are constant on the projection of the trapped set to the base manifold $X$; this is the special case we consider in \S\ref{s:special}.

This scheme has much in common with an aspect of $N$-particle scattering. In order to prove asymptotic completeness for the short range $N$-particle problem, it suffices to obtain improved weighted estimates in $\langle z\rangle^{1/2}L^2$, where $z$ is the variable on $\RR^{Nd}$ (or $\RR^{(N-1)d}$),  away from the {\em radial set} of the Hamilton vector field of the various subsystems, also called the {\em propagation set} of Sigal and Soffer \cite{Sigal-Soffer:N} (the corresponding global weighted estimate is in $\langle z\rangle^{1/2+\eps}L^2$, and the improvement though small is crucial in the argument). Since there cannot be an improvement at the radial set, the commutant used in the proof must commute microlocally with the Hamiltonian there. Similarly, in our case, there cannot be an improvement at the trapped set, and so our commutant must commute microlocally with $P$ there. In the $N$-particle setting,
the weights $\langle z\rangle^s$ do not commute with the Hamiltonians, unlike
the weights $h^{-s}$ in the semiclassical setting, so, to obtain a microlocally
commuting commutant, one needs to work with $s=0$, which in turn
gives rise to weighted estimates {\em only in the particular weighted space}
$\langle z\rangle^{1/2}L^2$ microlocally away from the radial set.
See \cite{Sigal-Soffer:N} and \cite{Derezinski-Gerard} for a discussion of asymptotic completeness, and \cite{Vasy:Geometry} for a discussion of the proof of this estimate from a microlocal point of view.

More standard escape function methods can prove related but weaker results. For example in \cite[Lemma 2.2]{bgh}, Burq, Guillarmou and Hassell use a positive commutator argument with a global escape function (see also \cite[Appendix]{GeSj} for a more general version of the same escape function) to prove local smoothing away from a trapped set. This corresponds in our setting to a resolvent estimate for $\im \lambda \ge Ch$ (i.e. not too close to the spectrum), and in this range of $\lambda$ one has more flexibility in the behavior of the escape function near infinity, because the resolvent has good mapping properties for a wider range of pairs of weighted spaces. This difference is most significant in the case of an asymptotically hyperbolic space, such as the hyperbolic cylinder of the example at the beginning of the introduction, because here it does not seem to be possible to modify the global escape function so as to give uniform estimates up to the spectrum. In Theorem~\ref{t:main} the global construction is replaced by the assumption that $u$ is $\Oh(h^{-1})$ on $\Gamma_-$ away from $\Gamma$. In the setting of resolvent estimates, this can be proved by commutator estimates on an asymptotically conic space (see \cite{vz}, \cite{d}), but on more general spaces other methods may be more convenient, or even necessary. For instance, in \cite{Melrose-SaBarreto-Vasy:Semiclassical}, Melrose, S\'a Barreto and the second author construct a parametrix for manifolds which are strongly asymptotically hyperbolic in a certain sense (see \S\ref{s:inf}), and the Lagrangian structure of this parametrix implies the semiclassically outgoing property. In \cite{v1,v2}, the second author proves the same result on more general even asymptotically hyperbolic spaces (in the sense of \S\ref{s:defnot}) using commutator methods, but in order to do this he considers a conjugated operator on a modified space. 

The other advantage over global escape function methods is that, because our assumptions and constructions are completely microlocalized to a neighborhood of $\Gamma$ (which may be a proper subset of the full trapped set), our method can give more precise information about a solution $u$ to $Pu=f$ in the case where different estimates on $f$ are available on different parts of $T^*X$. The key point is that in the Theorem \ref{t:cor} and in the example at the beginning of the introduction we apply Theorem~\ref{t:main} with $\Gamma$ a proper subset of the trapped set. %A global escape function construction (in the case of asymptotically hyperbolic space) does not seem to be able to exploit the fact that good estimates on $u$ near infinity fail only on $\Gamma^2_+$ and not on $\Gamma^1_-$. 

The structure of this paper is the following. In \S\ref{s:defnot} we give definitions and notation. In \S\ref{s:special}, we prove a special case of Theorem~\ref{t:first} in which the ideas of the proof are more transparent. In \S\ref{s:general} we prove Theorem~\ref{t:main}. In \S\ref{s:app} we prove Theorem \ref{t:cor} and give an example in which Theorem~\ref{t:main} can be applied to a subset of the trapped set which is not a connected component. In \S\ref{s:outgoing} we discuss the semiclassically outgoing assumption and give examples of situations where it is satisfied, and we deduce Theorem \ref{t:first} from Theorem~\ref{t:cor}.

We are grateful to Maciej Zworski for his interest in this project and for several stimulating discussions about polynomially bounded resolvents, and also to Vesselin Petkov for several interesting discussions about related results and problems in obstacle scattering. Thanks also to the anonymous referee for the suggestion to include a discussion of noncompactly supported weights.

\section{Definitions and notation}\label{s:defnot}

\begin{itemize}
\item Let $X$ be the interior of $\overline{X}$, a compact manifold with boundary and let $x$ be a boundary defining function on $\overline{X}$, that is a function $x \in C^\infty(\overline{X};[0,\infty))$ with $x^{-1}(0) = \D \overline{X}$ and $dx|_{\D \overline{X}} \ne 0$. Let $g$ be a Riemannian metric on $X$. We say that $(X,g)$ is \textit{asymptotically conic} (in the sense of the large end of a cone) if we have a product decomposition of $\overline X$ near $\D \overline X$ of the form $[0,\eps)_x \times \D X$ where the metric $g$ takes the form
\[g = \frac {dx^2}{x^4} + \frac{\tilde g}{x^2},\]
where $\tilde g$ is a symmetric cotensor smooth up to $\D X$ with $\tilde g|_{\D X}$ a metric. Such metrics are also sometimes called scattering metrics.

If on the other hand
\[g = \frac {dx^2}{x^2} + \frac{\tilde g}{x^2},\]
where $\tilde g$ is a symmetric cotensor smooth up to $\D X$ with $\tilde g|_{\D X}$ a metric, and with $\tilde g$ even in $x$, we say $(X,g)$ is \textit{asymptotically hyperbolic}. See \cite[Definition 1.2]{g} for a more invariant way to phrase this definition.

\item We denote by $\pi$ the projection $T^*X \to X$.

\item If $u$ is a function, $\|u\|$ denotes the $L^2(X)$ norm. If $A$ is an operator, $\|A\|$ denotes the  $L^2(X) \to L^2(X)$ norm. Angle brackets $\la \cdot,\cdot \ra$ denote the inner product on $L^2(X)$.

\item We say that a family of functions $u=(u_h)_{h\in (0,1)}$ on $X$ is \textit{polynomially bounded} if $\|u\| \le C h^{-N}$ for some $N$.

\item By $u \in \Oh(h^\infty)$ or $u = \Oh(h^\infty)$ we mean $\|u\| \le C_N h^N$ for every $N$ and for $h \in (0,1)$. By $\mu \ge - \Oh(h^\infty)$ we mean $\mu \ge -C_Nh^N$ for every $N$, $h \in (0,1)$.

\item  For $a =(a_h)_{h\in(0,1)} \in C^\infty(T^*X)$, we say $a \in S^{m,k}(X)$ if $a$ obeys 
\[\left|\D_z^\alpha \D_\zeta^\beta a \right| \le C_{\alpha,\beta} h^{-k}(1+|\zeta|^2)^{(m-|\beta|)/2},\]
in any coordinate patch, where the $z$ are coordinates in the base and $\zeta$ are coordinates in the fiber, and $\alpha,\beta$ are multiindices. Acting on $u\in\CI_0(X)$ compactly supported in a patch, $\Op(a)$ is a \textit{semiclassical quantization} given in local coordinates by
\[\Op(a)u(z) = \frac 1 {(2\pi h)^n} \int e^{iz\zeta/h}a(z,\zeta)\widehat {u}(\zeta)d\zeta.\]
The operator $\Op(a)$ can be extended to general $u \in C_0^\infty(X)$ by using a partition of unity subordinate to an atlas of charts, and we say $\Op(a) \in \Psi^{m,k}(X)$. The quantization depends on the choice of atlas and on the partition of unity, but the classes $S^{m,k}$ and $\Psi^{m,k}$ do not. Moreover, for given $A = \Op(a) \in \Psi^{m,k}$, the \textit{principal symbol}, defined to be the equivalence class of $a$ in $S^{m,k} \slash S^{m-1,k-1}$, is also invariantly defined. If $A \in \Psi^{m,k}$ and $B \in \Psi^{m',k'}$, then $[A,B] \in \Psi^{m+m'-1,k+k'-1}$ and has principal symbol $\frac h i H_ab$. See, for example, \cite{ds,ez} for more information on these and other results from semiclassical analysis discussed in this section.

\item By \textit{bicharacteristic} we always mean a bicharacteristic of $P$, that is an integral curve of the Hamiltonian vector field of $p$ (the principal symbol of $P$), contained in $p^{-1}(I)$. We denote by $\gamma_\rho$ the bicharacteristic at $\rho$ (or from $\rho$ or through $\rho$), which is defined by the properties  $\gamma_\rho'(t) = H_p (\gamma_\rho(t))$ and $\gamma_\rho(0) = \rho$. We denote this by $\gamma_\rho^\pm$ the restriction of $\gamma_\rho$ to $\{\pm t\ge0\}$.  We call $\gamma^+_\rho$ the \textit{forward bicharacteristic} and $\gamma^-_\rho$ the \textit{backward bicharacteristic}.

\item For $\Gamma\Subset T^*X$ invariant under the bicharacteristic flow, we define the {\em forward, resp.\ backward flowout} $\Gamma_+$, resp.\ $\Gamma_-$, of $\Gamma$ as the set of points $\rho \in T^*X$ from which the backward, resp.\ forward bicharacteristic segments tend to $\Gamma$, i.e.\ for any neighborhood $O$ of $\Gamma$ there exists $T>0$ such that $-t\geq T$, resp.\ $t\geq T$, implies $\gamma_\rho(t)\in O$. Here we think of $\Gamma$ as the trapped set or as part of the trapped, hence points in $\Gamma_-$, resp.\ $\Gamma_+$ are backward, resp.\ forward, trapped, explaining the notation.

\item For $E \subset T^*X$, we denote by $\Gamma_\pm^E$ the set $\{\rho \in \Gamma_\pm:\gamma_\rho^\mp \in E\}$. Note that $E \subset F \Rightarrow \Gamma_\pm^E \subset \Gamma_\pm^F$, and that $\Gamma_\pm \setminus \Gamma_\pm^U$ is closed when $U$ is open.

\item For $k \in \R \cup \{\infty\}$, we say that $u$ is $\Oh(h^k)$ at a point $\rho \in T^*X$ if there exists $a \in C_0^\infty(T^*X)$ with $a(\rho) \ne 0$ such that $\|\Op(a)u\| = \Oh(h^k)$. We say that $u$ is $\Oh(h^k)$ on a set $E \subset T^*X$ if it is $\Oh(h^k)$ at each point in $E$. Observe that if $E$ is compact we may sum finitely many such functions $|a|^2$ to obtain $b \in C_0^\infty(T^*X)$ which is nonvanishing on $E$ such that $\|\Op(b)u\| = \Oh(h^k)$. Observe also that the set on which $u$ is $\Oh(h^k)$ is open for any $k$.

\item The \textit{semiclassical wave front set}, $\WFh(u)$, is defined for polynomially bounded
$u$ as follows: a point $\rho\in T^*X$ is not in $\WFh(u)$ if $u$ is $\Oh(h^\infty)$ at $\rho$. One can also extend the definition to $\rho\in S^*X$ (thought of as the cosphere bundle at fiber-infinity in $T^*X$); then $\WFh(u)=\emptyset$ implies $u=\Oh(h^\infty)$ (in $L^2$). 

\item The \textit{microsupport}, $\WF_h'A$, is defined for $A = \Op(a) \in \Psi^{m,k}(X)$ as follows: a point $\rho \in T^*X$ is not in $\WF_h'A$ if $|\D^\alpha a| = \Oh(h^\infty)$ near $\rho$ for any multiindex $\alpha$. For any $B \in \Psi^{m',k'}$, we have $\WF'_h([A,B]) \subset \supp da$, and for any $u$ polynomially bounded we have $\WF_h Au \subset \WF_h'A \cap \WF_h u$.

\item If $A \in \Psi^{m,k}$ has principal symbol $a = a_h$, we say that $A$ (or $a$) is \textit{elliptic} at a point $\rho \in T^*X$ if $|a(\rho')| \ge C h^{-k}$ for $\rho'$ near $\rho$ and $h>0$ sufficiently small. We say $A$ (or $a$) is elliptic on a set $E \Subset T^*X$ if it is elliptic at each point in $E$, and we automatically get a uniform estimate $|a(\rho')| \ge C h^{-k}$ for $\rho' \in E$. \textit{Microlocal elliptic regularity} states that if $Au = f$ with $u$  polynomically bounded, then if $f$ is $\Oh(1)$ on a set $E$ and if $A$ is elliptic on $E$, then $u$ is $\Oh(1)$ on $E$.

\item Let $P \in \Psi^{m,0}(X)$ be a self adjoint, order $m>0$, semiclassical pseudodifferential operator on $X$, with principal symbol $p$. For $I \subset \R$ compact and fixed, denote the characteristic set by $\Sigma=p^{-1}(I)$, and suppose that the projection to the base, $\pi\colon\Sigma \to X$, is proper (it is sufficient, for example, to have $p$ classically elliptic). For $w \in C^\infty(T^*X;[0,\infty))$. We say that a point $\rho \in \Sigma$ is \emph{backward nontrapped} with respect to $p-iw$, if either $w(\gamma_\rho(t))>0$ for some $t < 0$ or if for any $K \Subset T^*X$, there exists $T_K<0$ such that $\gamma_\rho(t) \not\in K$ whenever $t \le T_K$.

\item We say that a polynomially bounded resolvent family $R_h(\lambda)$ is \textit{semiclassically outgoing} with loss of $h^{-1}$ at backward nontrapped points if the following holds. If $u = R_h(\lambda)f$ with $f$ compactly supported and $\rho$ lies on a backward nontrapped bicharacteristic, and if $f$ is $\Oh(1)$ on the backward flowout of $\rho$, then $u$ is $\Oh(h^{-1})$ at $\rho$. \emph{In the rest of the paper we will often write simply `semiclassically outgoing' for brevity, but note that this condition is stronger than the one in \cite{DaVa} because the loss is specified to be $h^{-1}$.} This condition is discussed in \S\ref{s:outgoing}.

\item In this setting \textit{propagation of singularities} states that if $u = R_h(\lambda)f$, and $u$ is $\Oh(h^k)$ at $\rho$ and $f$ is $\Oh(h^{k+1})$ on $\gamma_\rho([0,T])$ for some $T > 0$, then $u$ is $\Oh(h^k)$ at $\gamma_\rho(T)$.
\end{itemize}

\section{A microlocal proof in a non-microlocal setting}\label{s:special}

In the next section we prove our general result. In this section we prove a special case of Theorem~\ref{t:first}, indeed essentially a special case of \cite[(2.28)]{Burq:Lower} and \cite[(1.5)]{Cardoso-Vodev:Uniform}, in which the ideas are more transparent. We assume the resolvent is polynomially bounded and semiclassically outgoing at backward nontrapped points. However, we do not assume a specific structure at infinity: this is replaced by the semiclassically outgoing assumption, which is currently known for certain asymptotically conic and hyperbolic infinities (see \S\ref{s:outgoing}), but should hold in other cases as well. In this section we make a convexity assumption in an annular neighborhood of the trapped set, but this assumption is removed in the next section.

Let $X$ be a manifold without boundary, $g$ a complete metric on $X$, and $P$ a self-adjoint semiclassical Schr\"odinger operator on $X$. Assume that there exists a small family of convex compact hypersurfaces which enclose the trapped set in the following sense. Fix $I \subset \R$ compact and $x \in C^\infty(X)$ such that $\{x\ge1\}$ is compact and such that the trapped set $\Gamma$ (i.e. the set of precompact bicharacteristics in $p^{-1}(I)$) sits inside $\{x>5\}$. Suppose that the bicharacteristics $\gamma$ of $P$  in $p^{-1}(I)$ satisfy the convexity
assumption
\begin{equation}\label{eq:convexity}
1< x(\gamma(t)) < 5, \, \dot x(\gamma(t))=0\Rightarrow \ddot x(\gamma(t))<0.
\end{equation}
Here we note that if $f$ is a $\CI$
function on $[0,\infty)$ with $f'>0$,
and $x$ satisfies \eqref{eq:convexity} then so does
$f\circ x$.
In particular the specific constants above and below (such as $x<5$)
are chosen only for convenience,
and can be replaced by arbitrary constants that preserve the ordering. In examples $x$ might be the reciprocal of a function which measures distance to a given point, or more generally $x$ might be a boundary defining function.

\begin{figure}[htb]
\includegraphics[width=15cm]{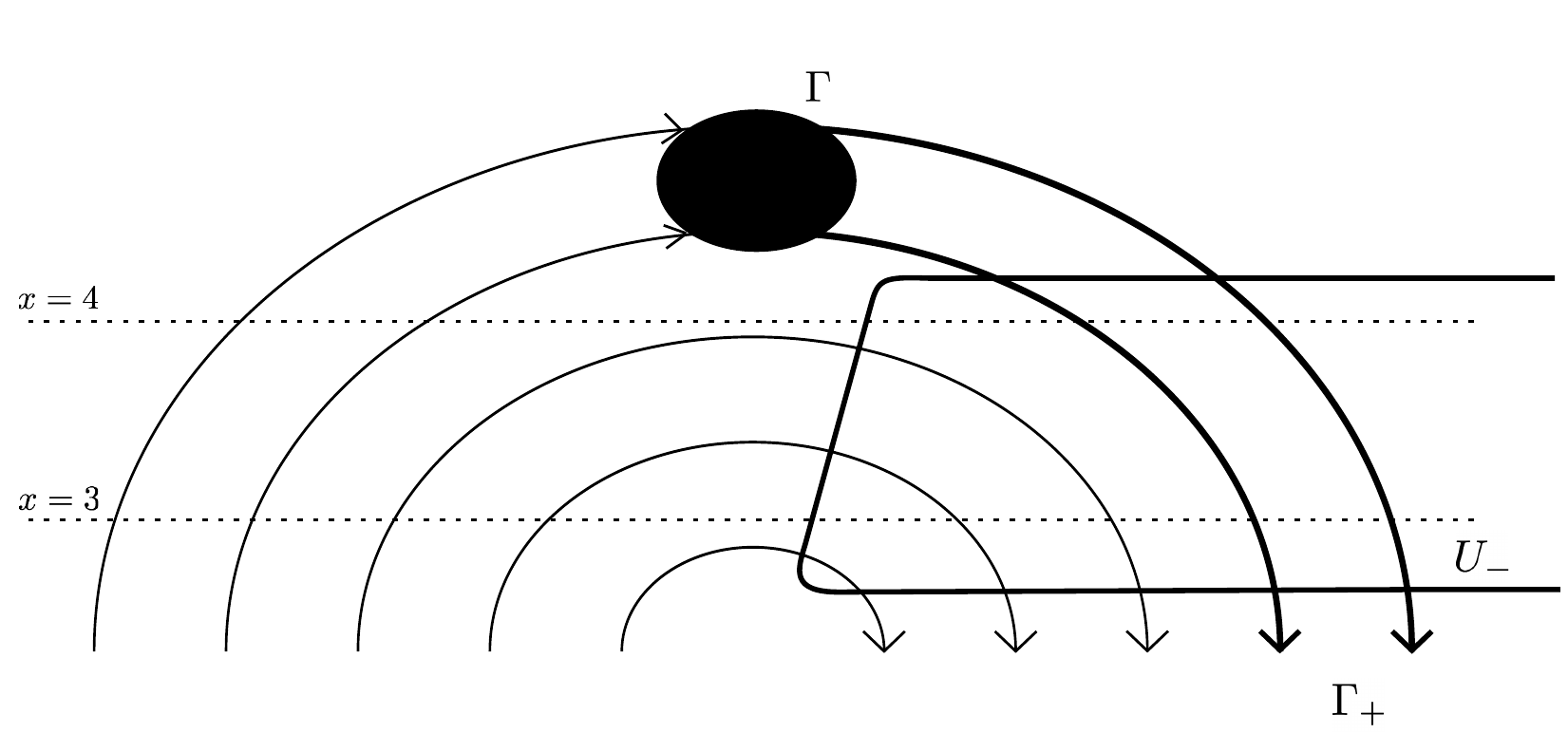}
\caption{The characteristic set $\Sigma = p^{-1}(I)$ in \S\ref{s:special}, with bicharacteristics shown as arrows. The first paragraph of the proof of Proposition~\ref{p:specialcase} reduces the problem to showing that $u$ is $\Oh(h^{-1})$ on a compact subset of $\Gamma_+ \cap \{3 < x < 4\}$. To do this we use as a commutator a cutoff function $\chi = \chi(x)$ which is $1$ for $x \ge 4$, $0$ for $x \le 3$, and monotonic in between. This commutator has a uniform sign on the part of $U_- \subset \{H_px < -c\}$ where $\chi'$ is bounded away from $0$. If $\supp \chi'$ is sufficiently large and $c >0$ is sufficiently small this set contains the compact subset of $\Gamma_+ \cap \{3 < x < 4\}$ in which we are interested.}
\end{figure}

\begin{prop}\label{p:specialcase} Let $(X,g)$, $P$, $I$,  and $x$ be as in the above paragraph. Assume that there exists $N >0$, $\chi_0 \in C_0^\infty(X)$ with $\chi_0 =1$ on $\{x\ge1\}$, and $C > 0$ such that the resolvent satisfies 
\[\|\chi_0 R_h(\lambda)\chi_0\| \le C h^{-N},\]
for $\lambda \in D \subset \{\re \lambda \in I$, $\im \lambda \ge -\Oh(h^\infty)\}$.
Assume that the resolvent is semiclassically outgoing at backward nontrapped points. Then if $\chi_1 \in C_0^\infty(X)$ is supported in $\{3<x<4\}$ we have
\[\|\chi_1 R_h(\lambda)\chi_1\| \le C h^{-1},\]
for $\lambda \in D$.
\end{prop}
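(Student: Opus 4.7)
The plan is to reduce the estimate to a microlocal bound on the compact set $K_0:=\pi^{-1}(\supp\chi_1)\cap\Sigma$, handle the points outside $\Gamma_+$ using the semiclassically outgoing hypothesis, and handle the compact subset of $\Gamma_+$ we care about via a positive commutator argument with commutant $\chi(x)$. Fix $g\in L^2$ with $\|g\|\leq 1$, set $f=\chi_1 g$ and $u=R_h(\lambda)f$; then $u$ is polynomially bounded by hypothesis, and the goal is $\|\chi_1 u\|\leq Ch^{-1}$. Microlocal elliptic regularity off $\Sigma$ together with properness of $\pi|_{\Sigma}$ reduces this to showing $u=\Oh(h^{-1})$ microlocally at every point of $K_0$.

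The convexity assumption \eqref{eq:convexity} has two consequences. First, along any bicharacteristic in $\{1<x<5\}$, $x$ admits no local minima, so it is monotonic or has a single local maximum there. Hence any $\rho\in K_0$ with $H_p x(\rho)\geq 0$ has its backward bicharacteristic strictly decreasing in $x$ until it exits $\{x\geq 1\}$; since $\Gamma\subset\{x>5\}$ contains every precompact bicharacteristic, such a trajectory cannot be precompact and must leave every compact set backward, so $\rho$ is backward nontrapped. The semiclassically outgoing hypothesis then gives $u=\Oh(h^{-1})$ at $\rho$. Second, this argument rules out $H_p x=0$ on $\Gamma_+\cap\{3<x<4\}$ (a local max would have $x$ decreasing going backward, contradicting backward trapping to $\Gamma$), so by compactness $H_p x\leq -c$ on $\overline{\Gamma_+}\cap K_0\cap\{3\leq x\leq 4\}$ for some $c>0$, and $u=\Oh(h^{-1})$ on the complementary region $K_0\cap\{H_p x\geq -c/2\}$.

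For the positive commutator, pick $\chi\in C^\infty(\R;[0,1])$ with $\chi=0$ for $x\leq 3$, $\chi=1$ for $x\geq 4$, $\chi'\geq 0$, and $\chi'$ uniformly bounded below on $\pi(K_0\cap\Gamma_+)$. Set $B:=\chi(x)$, a self-adjoint multiplication operator. The commutator identity
\[\la i[P,B]u,u\ra=2\im\lambda\la Bu,u\ra-2\im\la Bu,f\ra\]
combined with $\im\lambda\geq -\Oh(h^\infty)$ yields $\la i[P,B]u,u\ra=-2\im\la Bu,f\ra+\Oh(h^\infty)$. Since $B$ is multiplication and $\supp f\subset\{3<x<4\}$,
\[|\la Bu,f\ra|=\Big|\int\chi\chi_1\,u\,\bar g\,dx\Big|\leq\|\chi_1 u\|\|g\|\leq\|\chi_1 u\|.\]
The principal symbol of $i[P,B]/h$ is $\chi'(x)H_p x$, which I would split using a smooth cutoff $\psi=\psi(H_p x)$ (with $\psi=1$ for $s\leq -c/2$ and $\psi=0$ for $s\geq -c/4$) as $\chi'(x)H_p x=-q^2+r$, where $q^2:=-\chi'(x)H_p x\cdot\psi\geq 0$ is supported in the good region $\{H_p x\leq -c/4\}\cap\{3<x<4\}$ and $r$ is supported in $\{H_p x\geq -c/2\}$ where $u=\Oh(h^{-1})$ is already known. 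Sharp G{\aa}rding on $\Op(q^2)$, together with the lower bound $q^2\gtrsim\chi_1(x)^2$ on the region $\{H_p x\leq -c/4\}$ (from the choice of $\chi'$), gives $\|\chi_1 u\|^2\leq C\la\Op(q^2)u,u\ra+Ch^{-2}$ (the $Ch^{-2}$ coming from the a priori bound on the complement). The commutator identity then yields
\[\|\chi_1 u\|^2\leq Ch^{-2}+\tfrac{C}{h}\|\chi_1 u\|,\]
from which $\|\chi_1 u\|\leq Ch^{-1}$.

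The main obstacle is controlling the semiclassical remainders: the $\Oh(h^2)$ correction in $[P,B]$ and the $\Oh(h)$ error from sharp G{\aa}rding naively contribute $\Oh(h^{2-2N})$ through the polynomial a priori bound on $\|u\|$. These are tamed by microlocalizing every remainder operator to $\supp\chi'$ and splitting further by $\psi$, so that each piece either lies in the region $\{H_p x\geq -c/2\}$ where $u=\Oh(h^{-1})$ is already known or can be absorbed into the main sharp-G{\aa}rding-controlled term $\la\Op(q^2)u,u\ra$ on the left-hand side.
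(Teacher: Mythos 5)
Your overall strategy coincides with the paper's: reduce to a microlocal bound on $\Sigma\cap T^*\supp\chi_1$, use elliptic regularity and the semiclassically outgoing hypothesis to dispatch points off $\Gamma_+$, then run a positive commutator argument with commutant $\chi(x)$ on the remaining piece of $\Gamma_+$, splitting the commutator symbol by the sign of $H_px$ so that one part has a sign and is elliptic on $\Gamma_+\cap\supp\chi'$ while the other lives where $u=\Oh(h^{-1})$ is already known. Your handling of the inhomogeneous term $\la Bu,f\ra$ (keeping it, bounding it by $\|\chi_1 u\|$, and closing with the quadratic inequality $t^2\leq Ch^{-2}+Ch^{-1}t$) is a sensible and, if anything, more careful alternative to the paper's remark that the left side of \eqref{eq:commutator-id} vanishes by support considerations, which literally requires $\chi\chi_1\equiv 0$ and so is in tension with wanting $\chi'>0$ over $\supp\chi_1$.

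The genuine gap is in the step you yourself flag as the main obstacle. You claim the subprincipal remainder $h^2\la Fu,u\ra$, with $\WF_h'F\subset\supp d\chi$, can be ``microlocalized and absorbed into $\la\Op(q^2)u,u\ra$''; this is not justified and in fact cannot work. Absorption would require a bound of the shape $h\,|f_2|\lesssim q^2$ on $\WF_h'F$, but $q^2=-\chi'(x)H_px\cdot\psi$ vanishes on $\D(\supp\chi')$, whereas the symbol of $F$ (built from $\chi''$, the $\Op(b)^*\Op(b)-\Op(b^2)$ error, the subprincipal term of $[P,\chi]$, etc.) generically does not; the ratio blows up at $\D(\supp\chi')$ no matter how fast $\chi'$ is chosen to vanish. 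Moreover $\D(\supp\chi')\cap\Gamma_+$ is typically nonempty, and on that set the only available estimate is the a priori $\|u\|=\Oh(h^{-N})$, so $h^2\la Fu,u\ra=\Oh(h^{2-2N})$ and the argument fails outright for $N>3/2$ (which is the interesting case). The paper's fix --- and the essential mechanism here and in the proof of Theorem~\ref{t:main} --- is an iteration: assume $u=\Oh(h^k)$ on a compact set containing $\supp d\chi\cap\Sigma$ for some $k\leq -3/2$, so that $h^2\la Fu,u\ra=\Oh(h^{2+2k})$, conclude $u=\Oh(h^{k+1/2})$ on the elliptic set of $B$, and repeat roughly $2(N-3/2)$ times, shrinking $\supp d\chi$ at each stage (which in turn forces the initial $\supp\chi'$ to be taken large enough to survive the shrinking). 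Your one-shot version does not close without this inductive bootstrap, and no choice of cutoff or microlocal splitting within $\supp d\chi$ substitutes for it.
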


\begin{proof}
We show first that if $v$ is compactly supported with $\|v\| = \Oh(1)$, and
$u=R_h(\lambda)f$, where $f=\chi_1 v$, then $u$ is $\Oh(h^{-1})$ on $T^*\supp\chi_1$. By our hypothesis,
$u$ is polynomially bounded in $h$, namely is $\Oh(h^{-N})$,
$(P_h-\lambda)u_h=f_h$ is $\Oh(1)$ and compactly supported, and
$(P_h-\lambda)u_h=0$ in $x>2$. Thus, by microlocal elliptic regularity,
using the polynomial bound, $u_h$ is $\Oh(1)$ away from the characteristic set,
$\Sigma=\{p-\re\lambda \in I\}$.
Moreover, by the semiclassical outgoing assumption, if $\rho$ is a point
in $\Sigma$ but not in $\Gamma_+$, then $(P-\lambda)u=f$ being
$\Oh(1)$ implies that $u$ is $\Oh(h^{-1})$ along $\gamma_\rho$, i.e. a
non-trapping estimate holds microlocally along $\gamma_\rho$. It remains to show that $u$ is $\Oh(h^{-1})$ on points in $\Sigma \cap \Gamma_+ \cap \{3<x<4\}$.

To do so, we proceed inductively, assuming that for some $k\leq -3/2$, $u$ is $\Oh(h^k)$ in a compact subset of $\{3<x<4\}$, and show that it is in fact $\Oh(h^{k+1/2})$ on a slightly smaller subset. Note that the last assumption automatically holds with $k\leq -N$ by the a priori
polynomial bound assumption, and thus the proof of the proposition is complete once
the inductive step is shown.

Take $\chi=\chi(x)\geq 0$ to be a 
function such that $\chi\equiv 1$ in $x\geq 4$, $\chi\equiv 0$ in $x\leq 
3$, and $\chi$ is a increasing function of $x$, and $\chi'=\psi^2$ with
$\psi$ smooth. By microlocal elliptic regularity,
$\WFh(u)\cap\supp\chi$ is a subset of the characteristic set of $P_h-\lambda$.
Then consider
\begin{equation}\label{eq:commutator-id}
\langle \chi u,(P-\lambda)u\rangle-\langle 
\chi (P-\lambda)u,u\rangle
=\langle [P,\chi] u,u\rangle+\langle 2i \im\lambda \chi u,u\rangle.
\end{equation}
The left hand side {\em vanishes} in view of the 
support properties of $\chi$ and $f=(P-\lambda)u$. Since $\im\lambda\geq
-\Oh(h^\infty)$, i.e.\ for all $M$ there is $C$ such that
$\im\lambda\geq -Ch^M$,
we thus conclude that
$$
\langle i[P,\chi] u,u\rangle\geq -\Oh(h^\infty).
$$
The semiclassical principal 
symbol of 
$[P,\chi]$ is
$$
\frac{1}{i}h H_p\chi=\frac{1}{i}h \chi' H_p x.
$$
Letting $c>0$ to be determined later on,
we now use a partition of unity for $T^*X$ corresponding to an open
cover which in a neighborhood of the characteristic
set  over $\{3\leq x\leq 4\}$ is essentially given in terms of the sign of
$H_p x$. So consider a neighborhood of the characteristic set  over $\{3\leq x\leq 4\}$ with compact
closure $K$, and let $O$ be a neighborhood of $K$ with compact closure, and
consider the open cover of $T^*X$ by
$$
U_-\Def\{\rho\in O:\ H_p x(\rho)<-c\},\ U_+\Def\{\rho\in O:\ H_p x(\rho)>-2c\}\cup (T^*X\setminus K),
$$
and take $\phi_\pm \in C^\infty(T^*X)$ with  $\phi_+^2+\phi_-^2=1$ and $\supp\phi_\pm\subset\supp U_\pm$.
Then $(-H_p x)^{1/2}$ is $\CI$ on $\supp\phi_-$, and
$$
H_p\chi=-((-H_p x)^{1/2}\psi\phi_-)^2+\psi^2 H_p x\phi_+^2,
$$
so with $b\Def(-H_p x)^{1/2}\psi\phi_-$, $e\Def\psi^2 H_p x\phi_+^2$, $B,E\in\Psi^{-\infty,0}(X)$
with principal symbol $b$, resp.\ $e$, and microsupport $\supp b$, resp. $\supp e$.
$$
i[P,\chi]=-hB^*B+hE+h^2F,
$$
where $F\in\Psi^{-\infty,0}(X)$,
so
$$
h\|Bu\|^2=\langle (hE+h^2F-i[P,\chi])u,u\rangle=h\langle Eu,u\rangle+h^2\langle
Fu,u\rangle-2\im\lambda\|\chi^{1/2}u\|^2.
$$
Note that $h^2\langle Fu,u\rangle$ is $\Oh(h^{2+2k})$ by our a priori assumptions.
Thus, if  $u$ is $\Oh(h^{k+1/2})$ on $\WF_h'(E)$ (half an order better than
a priori expected), the same is true for $u$ on the elliptic set of $B$, i.e.\ we have
half an order improvement on the elliptic set of $B$.

So far we worked with arbitrary $c$; however, if $c$ is not suitably chosen,
the assumption on $u$ on $\WF_h'(E)$ is not necessarily satisfied.
Namely, we need to choose
$c$ so that $\WF_h'(E)$ is in the union of the elliptic set with the backward
non-trapped set, where we already have $\Oh(h^{-1})$ bounds on $u$.

To do so we choose $c>0$ sufficiently small so that all bicharacteristics from
points $\rho$ in $\{3\leq x\leq 4\}$ with $(H_p x)(\rho)\geq -2c$
escape to $x<3$ in the 
backward direction without entering the region $x\geq 5$. This is possible
due to convexity and compactness: by convexity, if $H_px(\rho)\geq 0$ implies
that on the backward bicharacteristic through $\rho$, $x$ is decreasing as time decreases, so
by compactness there exists $T>0$ such that if $\rho$ is as above, then at time
$-T$ the bicharacteristics are in $x\leq 2$. Then by compactness again, there
is $c>0$ such that for all $\rho$ with $(H_p x)(\rho)\geq -2c$, at time $-T$
the bicharacteristics are in $x\leq 2.5$. With this choice of $c$,
every point in $\WF_h'(E)$ is backward
non-trapped or elliptic. Thus, for
$k+1/2\leq -1$, one deduces that $u$ is $\Oh(h^{k+1/2})$ on the elliptic
set of $B$. In particular, we conclude that where $\chi'>0$, $u$ is $\Oh(h^{k+1/2})$
since such points are either in the elliptic set of $B$ or of $P-\lambda$, or $(H_p x)(\rho)\geq -2c$
there, and in either case $u$ is $\Oh(h^{-1})$ (here we use $k+1/2\leq -1$).

One can iterate this by shrinking the support of $d\chi$, hence those of $B$ and $E$
and deduce that $u$ is actually $\Oh(h^{-1})$ in any compact subset
of $\{3<x<4\}$ (one has to choose the initial $\chi$ appropriately if this subset
is large). This proves that $u$ is $\Oh(h^{-1})$ on $\supp\chi_1$,
i.e.\ $\|\chi_1 R_h(\lambda)\chi_1 v\|\leq Ch^{-1}$. An application of Banach-Steinhaus
finishes the proof, giving a constant $C$ uniform in $v$.
\end{proof}

We remark that a key point in this argument is that because $(P-\lambda)u=0$ in the 
trapping region, one needs to know nothing about $u$ itself when one 
considers $\langle\chi (P-\lambda)u,u\rangle-\langle \chi u,(P-\lambda)u\rangle$
in \eqref{eq:commutator-id},
at least if $\im\lambda\geq-\Oh(h^\infty)$.
If instead 
$(P-\lambda)u$ is $\Oh(1)$ there, then all one can say is that $u$ is $\Oh(h^{-N})$
which completely destroys the bounds above, i.e.\ gives a loss.

It is worth noting that although we needed $\im\lambda\geq -\Oh(h^\infty)$, in
any region $\im\lambda\geq -Ch^s$, $s>1$, we can do a finite amount of
iteration and improve on the assumption that $u$ is $\Oh(h^{-N})$. However,
it is not clear whether this can give any useful bounds in practice.

\section{The general case}\label{s:general}

In this section we prove Theorem~\ref{t:main}. First observe that if $u$ is $\Oh(h^{-1})$ at a point $\rho \in \Gamma_+$, then it is $\Oh(h^{-1})$ on $\gamma_\rho^+$, the forward bicharacteristic from $\rho$. Hence it suffices to construct a microlocal commutant whose commutator is positive on points $\rho$ such that $\gamma_\rho^-$ is contained in a small neighborhood of $\Gamma$, and merely nonnegative on the rest of $\Gamma_+$. The main constraint on the neighborhood in which we work is that it must be contained in the $U$ of Lemma~\ref{l:nontrap} and Remark~\ref{r:nontrap}. The proof uses an inductive iteration as in \S\ref{s:special}, so in Lemma~\ref{l:escfunc} we introduce open neighborhoods $\Gamma \subset U_1 \Subset U_0 \Subset U$ but no other properties of these neighborhoods will be used, and they may be arbitrarily close to $\Gamma$ and to $\D U$ respectively.

\begin{lem}\label{l:nontrap}
Suppose $U_-$ is a neighborhood of $(\Gamma_-\setminus\Gamma)
\cap(\overline{W}\setminus V)$.
There is a neighborhood $U \subset V$ of $\Gamma$ such that if
$\alpha\in U\setminus\Gamma_+$ then the
backward bicharacteristic from $\alpha$ enters $U_-$.
\end{lem}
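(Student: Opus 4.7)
The plan is to argue by contradiction: if no such $U$ exists, I would extract a sequence $\alpha_n$ approaching $\Gamma$ with $\alpha_n\notin\Gamma_+$ whose backward bicharacteristics miss $U_-$, and then derive a contradiction by showing that the last exit points from $V$ must accumulate at $(\Gamma_-\setminus\Gamma)\cap(\overline W\setminus V)\subset U_-$.

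More precisely, I would fix a metric or descending basis of neighborhoods of $\Gamma$ and, assuming the lemma fails, pick $\alpha_n\in V\setminus\Gamma_+$ with $\alpha_n\to\alpha^*\in\Gamma$ (using compactness of $\Gamma$) such that $\gamma_{\alpha_n}^-$ never enters $U_-$. For each $n$ define the backward exit time
\[
T_n=\inf\{t\le 0:\gamma_{\alpha_n}(s)\in V\text{ for all }s\in (t,0]\},
\]
which is finite by the dynamical assumption \eqref{e:dynamicassumption} applied to $\alpha_n\in W\setminus\Gamma_+$, and set $\beta_n=\gamma_{\alpha_n}(T_n)\in\partial V$, noting that $\gamma_{\alpha_n}([T_n,0])\subset\overline V$. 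The first technical step is to show $T_n\to-\infty$: otherwise, along a subsequence $T_n\to T^*\in(-\infty,0]$, continuous dependence on initial conditions would give $\beta_n\to\gamma_{\alpha^*}(T^*)$, which lies in $\Gamma$ by invariance and in $\partial V$ as a closed limit, contradicting $\Gamma\subset V$ (interior).

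Next, by compactness of $\partial V\cap\overline W$ I pass to a further subsequence with $\beta_n\to\beta^*\in\partial V$. The core step is to show $\beta^*\in\Gamma_-$. For any fixed $t\ge 0$, continuous dependence gives $\gamma_{\beta_n}(t)\to\gamma_{\beta^*}(t)$; for $n$ so large that $|T_n|\ge t$, the identity $\gamma_{\beta_n}(t)=\gamma_{\alpha_n}(T_n+t)$ together with $T_n+t\in[T_n,0]$ shows $\gamma_{\beta_n}(t)\in\overline V$, hence in the limit $\gamma_{\beta^*}([0,\infty))\subset\overline V\subset W$. Thus the forward bicharacteristic from $\beta^*$ never meets $W\setminus\overline V$, so the contrapositive of \eqref{e:dynamicassumption} (applied in the forward direction) forces $\beta^*\in\Gamma_-$. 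Since $\beta^*\in\partial V$ and $\Gamma\subset V$, we get $\beta^*\in(\Gamma_-\setminus\Gamma)\cap(\overline W\setminus V)\subset U_-$, so $\beta_n\in U_-$ for $n$ large, contradicting the hypothesis that $\gamma_{\alpha_n}^-$ misses $U_-$.

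The main obstacle I anticipate is the careful verification that the limiting bicharacteristic $\gamma_{\beta^*}$ genuinely stays in $\overline V$ for all positive time, which relies on the uniform estimate $\gamma_{\alpha_n}([T_n,0])\subset\overline V$ combined with $T_n\to-\infty$ and continuous dependence. Once this is secured, the contrapositive form of \eqref{e:dynamicassumption} does all the work of identifying $\beta^*$ as a point of $\Gamma_-$; the rest is bookkeeping.
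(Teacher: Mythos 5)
Your proof is correct and follows essentially the same approach as the paper's: extract $\alpha_n\to\alpha^*\in\Gamma$, take exit points $\beta_n\in\partial V$, pass to a limit $\beta^*\in\partial V$, use \eqref{e:dynamicassumption} to conclude $\beta^*\in\Gamma_-$, and contradict the hypothesis that the backward bicharacteristics avoid $U_-$. The only (minor and harmless) organizational difference is that you establish $T_n\to-\infty$ as a separate preliminary step and then show directly that $\gamma_{\beta^*}^+$ stays in $\overline V$, whereas the paper instead assumes $\beta\notin\Gamma_-$, derives a uniform bound $t_{j_k}>-T$, and obtains a contradiction from that.
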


\begin{rem}
\label{r:nontrap}Note that from this and from the assumption that $u$ is $\Oh(h^{-1})$ on $\Gamma_-$, it follows that that $u$ is $\Oh(h^{-1})$ on $U \setminus \Gamma_+$, provided $U_-$ is chosen sufficiently small, namely small enough that $u$ is $\Oh(h^{-1})$ on $U_-$. Note also that, because $U \subset V$, we have $\WF_h f \cap U = \emptyset$.
\end{rem}

\begin{proof}
Suppose no such $U$ exists. Then there is a sequence $\alpha_j\in V\setminus\Gamma_+$
such that
$\alpha_j\to \Gamma$ but the backward bicharacteristics $\gamma^-_{\alpha_j}$
through $\alpha_j$
are disjoint from $U_-$; by passing to a subsequence, using the compactness of
$\Gamma$, we may assume that $\alpha_j\to\alpha\in\Gamma$.
By \eqref{e:dynamicassumption}, the bicharacteristics $\gamma_{\alpha_j}^{-}$ enter $W\setminus
\overline{V}\subset \overline{W}\setminus V$, and the latter is compact.
Let $t_j=\sup\{t<0:\ \gamma_{\alpha_j}(t)\in\overline{W}\setminus V\}$, and
let $\beta_j=\gamma_{\alpha_j}(t_j)$, so $\beta_j\in\overline{W}\setminus V$ as the latter set is closed. Moreover, $\beta_j \in \overline V$: indeed $\gamma_{\alpha_j}([t_j,0])$ is connected and contained in $\overline V \cup (T^*X \setminus W)$, a union of disjoint closed sets, and $\gamma_{\alpha_j}(0) \in V \subset \overline{V}$. By the compactness of $\overline{V}$, the
$\beta_j$ have a convergent subsequence, say $\beta_{j_k}$, converging to
some $\beta\in (\overline W \setminus V) \cap \overline{V} = \D V$.

We claim that
$\beta\in\Gamma_-$, which is a contradiction with $\beta_{j_k}\notin U_-$.
Indeed, otherwise, by \eqref{e:dynamicassumption}, the forward bicharacteristic
$\gamma^+_\beta$ from $\beta$ intersects $W\setminus\overline{V}$. Moreover, since $\gamma_\beta(0) = \beta \in \overline{V}$, there is $T>0$ such that $\gamma_\beta(T)
\in W\setminus\overline{V}$. 
Then, for sufficiently large $k$, the same is true
for the forward bicharacteristic at time $T$ from $\beta_{j_k}$ as
$W\setminus\overline{V}$ is open, i.e.\ $\gamma_{\alpha_{j_k}}(t_{j_k}+T)\in W\setminus\overline{V}$.
By the definition of $t_{j_k}$, $t_{j_k}+T>0$, so $t_{j_k}>-T$ for all $k$.
But, if $\gamma_\alpha$ is the bicharacteristic
through $\alpha$, then $\gamma_{\alpha_{j_k}}(t)\to\gamma_\alpha(t)$ uniformly in $[-T,0]$. By passing
to a convergent subsequence of $t_{j_k}$, say $t_{j'_k}$, $\gamma_{\alpha_{j'_k}}(t_{j'_k})\to
\gamma_\alpha(\lim t_{j'_k})\in \Gamma$ by the flow-invariance of $\Gamma$, so
$\beta\in\Gamma$ which contradicts $\beta \not\in V$. Thus, $\beta\in\Gamma_-$, as claimed.
\end{proof}

In the following lemma we construct an escape function $q \in C_0^\infty(T^*X)$ which is constant near $\Gamma$, nonincreasing along $\Gamma_+$, and has $H_pq<0$ on a sufficiently large subset of $\Gamma_+$. This construction is based in part on the construction of a nontrapping escape function in \cite[\S 4]{vz} and on the construction of an escape function away from a trapped set in \cite[Appendix]{GeSj}. We will use a quantization of $q$ as a microlocal commutant in this section, replacing the cutoff function $\chi$ of \S\ref{s:special}.

\begin{lem}\label{l:escfunc}
Let $U_1$and $U_0$ be an open set with $\Gamma \subset U_1 \Subset U_0 \Subset U$. Then there exists a nonnegative function $q \in C_0^\infty(U)$ such that 
\[q = 1 \textrm{ near } \Gamma, \qquad H_p q \le 0 \textrm{ near } \Gamma_+, \qquad H_p q< 0 \textrm{ on } \Gamma_+^{\overline{U_0}} \setminus U_1.\]
Moreover, we can take $q$ such that both $\sqrt q$ and $\sqrt{-H_pq}$ are smooth near $\Gamma_+$.
\end{lem}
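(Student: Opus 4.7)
The plan is to construct $q$ in the form $q = \phi \cdot f(\tau)$, where $\phi \in C_0^\infty(U)$ is a cutoff, $f \in C^\infty(\R;[0,1])$ is a smooth nonincreasing profile with $f \equiv 1$ near $0$, and $\tau$ is a smooth ``backward escape time'' on a suitable neighborhood of $\Gamma_+^{\overline{U_0}}$. On the open set where $\phi \equiv 1$ the identity $H_p q = f'(\tau)\, H_p \tau$ will deliver the nonpositivity of $H_p q$ from $f' \leq 0$ and $H_p \tau \geq 0$; by arranging $-f'$ and $H_p \tau$ each as an explicit smooth square, the root $\sqrt{-H_p q}$ will be manifestly smooth.

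To build $\tau$, fix nested open sets $\Gamma \subset U_2 \Subset U_1' \Subset U_1$, and choose $h \in C^\infty(T^*X;[0,1])$ with $h \equiv 0$ on $U_2$ and $h \equiv 1$ outside $U_1'$, so that $e := 1 - h^2$ is a bump equal to $1$ near $\Gamma$ and supported in $\overline{U_1'}$. The set $\Gamma_+^{\overline{U_0}}$ is closed (by the observation in \S\ref{s:defnot}) and contained in $\overline{U_0}$ (since $\rho \in \gamma_\rho^-$), hence compact. By the defining property of $\Gamma_+$, each of its backward bicharacteristics tends to $\Gamma \subset U_2$; compactness produces a uniform $T>0$ and an open neighborhood $N \supset \Gamma_+^{\overline{U_0}}$ with $\gamma_\rho(-T) \in U_2$ for every $\rho \in N$. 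Define
\[
\tau(\rho) = \int_{-T}^{0} h(\gamma_\rho(s))^2 \, ds,
\]
smooth wherever the flow exists up to backward time $T$. Differentiating under the integral gives $H_p \tau(\rho) = h(\rho)^2 - h(\gamma_\rho(-T))^2$, which on $N$ simplifies to the smooth square $h(\rho)^2 \geq 0$. The flow-invariance of $\Gamma$ and continuity of the flow give $\tau \equiv 0$ on a small neighborhood of $\Gamma$; $\tau \leq T$ on $N$; and on $\Gamma_+^{\overline{U_0}} \setminus U_1$, the point $\rho$ lies outside $\overline{U_1'}$, so $h \equiv 1$ on a short backward segment of $\gamma_\rho$, yielding a uniform positive lower bound $\tau_1$ for $\tau$ on this compact set.

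Next, choose $f \in C^\infty(\R;[0,1])$ nonincreasing with $f \equiv 1$ on $(-\infty, t_1]$ and $f \equiv 0$ on $[t_2, \infty)$, where $0 < t_1 < \tau_1$ and $t_2 > T$; a standard construction using flat profiles built from $e^{-1/t^2}$-type functions admits simultaneous factorizations $f = F^2$ and $-f' = g^2$ with smooth nonnegative $F, g$. Choose $\phi = \Phi^2 \in C_0^\infty(U;[0,1])$ with $\supp \phi \subset N$ and $\phi \equiv 1$ on a smaller open $\widetilde N \supset \Gamma_+^{\overline{U_0}}$ within $N$, arranged so that $\Gamma_+ \cap \supp q \subset \widetilde N$. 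Setting $q = \phi\, f(\tau)$, one verifies: $q \in C_0^\infty(U;[0,\infty))$ and $q \equiv 1$ near $\Gamma$; on $\widetilde N$, $H_p q = f'(\tau)\, H_p \tau = -g(\tau)^2 h(\rho)^2 \leq 0$, while outside $\supp q$ one has $H_p q \equiv 0$; on $\Gamma_+^{\overline{U_0}} \setminus U_1$, the additional facts $h(\rho) = 1$ and $\tau(\rho) \in (\tau_1, T] \subset (t_1, t_2)$ force $g(\tau) > 0$ and hence $H_p q < 0$; and on $\widetilde N$, the formulas $\sqrt q = \Phi\, F(\tau)$ and $\sqrt{-H_p q} = g(\tau)\, h$ are smooth.

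The main obstacle will be ensuring that both $\sqrt q$ and $\sqrt{-H_p q}$ are smooth near $\Gamma_+$; this forces the explicit factorizations $1 - e = h^2$ and $-f' = g^2$ with smooth nonnegative $h$ and $g$, which is not automatic for generic smooth nonnegative functions and dictates the specific form of the construction above. A secondary technical point is the containment $\Gamma_+ \cap \supp q \subset \widetilde N$, which is needed because points of $\Gamma_+$ whose backward trajectories exit $\overline{U_0}$ are not uniformly controlled by $\tau$; this is handled by choosing $\supp \phi$ sufficiently close to $\Gamma_+^{\overline{U_0}}$ within $N$.
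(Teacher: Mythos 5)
Your construction takes a genuinely different route from the paper's: you use a single backward escape-time integral $\tau(\rho) = \int_{-T}^0 h^2(\gamma_\rho(s))\,ds$ composed with a profile, whereas the paper builds its precursor $\tilde q$ as a finite sum of flow-box functions $q_{\rho_k}$, each of which is made to drop to the value $-2$ once the bicharacteristic has flowed forward past $V_0 \supset U_0$, and then sets $q = \chi_q\, f(\tilde q)$ with $f$ a profile vanishing for $\tilde q \le -2$. The escape-time idea is natural, but as written there is a gap.

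The gap is exactly what you flag at the end as a ``secondary technical point'': the containment $\Gamma_+ \cap \supp q \subset \widetilde N$, equivalently $\supp d\phi \cap \Gamma_+ = \emptyset$, cannot be arranged by shrinking $\supp\phi$. Since $h\le 1$ gives $0\le\tau\le T< t_2$ everywhere $\tau$ is defined, $f(\tau)>0$ on all of $N$, hence $\supp q = \supp\phi$, and the term $(H_p\phi)\,f(\tau)$ in $H_pq$ is never switched off by $f$. Now $\phi$ must descend from $1$ near the compact set $\Gamma_+^{\overline{U_0}}$ to $0$ before reaching $\partial N$, so $\supp d\phi$ is a shell separating the two. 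But $\Gamma_+$ is not contained in $\overline{U_0}$ (if it were, $\Gamma_+$ would be compact, which is not the case for the trapped geometries of interest), and $\Gamma_+^{\overline{U_0}}$ is not open inside $\Gamma_+$: points of $\Gamma_+$ whose backward orbits leave $\overline{U_0}$ accumulate on those whose backward orbits only touch $\partial U_0$. Hence each forward bicharacteristic from near $\Gamma$ must pass from $\widetilde N$ through $\supp d\phi$, where $H_p\phi$ has no definite sign and $f(\tau)>0$, so $H_pq\le 0$ near $\Gamma_+$ fails.

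What your construction lacks is a mechanism to make $f$ vanish on $\Gamma_+$ near $\supp d\phi$. Your $\tau$ measures escape only from $U_2\subset U_1$ (the small set near $\Gamma$), so it never enters the range $[t_2,\infty)$ on which $f\equiv 0$. The paper's $\tilde q$, by contrast, is designed so that $\tilde q\le -2$ on $\Gamma_+\setminus\Gamma_+^U$, making $f(\tilde q)\equiv 0$ there, which is precisely what allows $\supp d\chi_q$ to meet $\Gamma_+$ harmlessly. To repair your argument you would have to augment $\tau$ (or multiply in a second escape-type factor) so that it detects departure of the backward orbit from $\overline{U_0}$ and pushes the argument of $f$ into the vanishing range --- and then re-verify the monotonicity and smooth-square-root claims for the modified object, at which point the construction becomes comparable in complexity to the paper's sum-of-flow-box approach.
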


Recall that $\Gamma_+^E$ is the set of points $\rho \in \Gamma_+$ whose backward bicharacteristic $\gamma_\rho^-$ is contained in $E$. The condition that $\sqrt q$ and $\sqrt{-H_pq}$ are smooth near $\Gamma_+$ is used only to avoid invoking the sharp G\aa rding inequality.

\begin{figure}[htb]
\includegraphics[width=13cm]{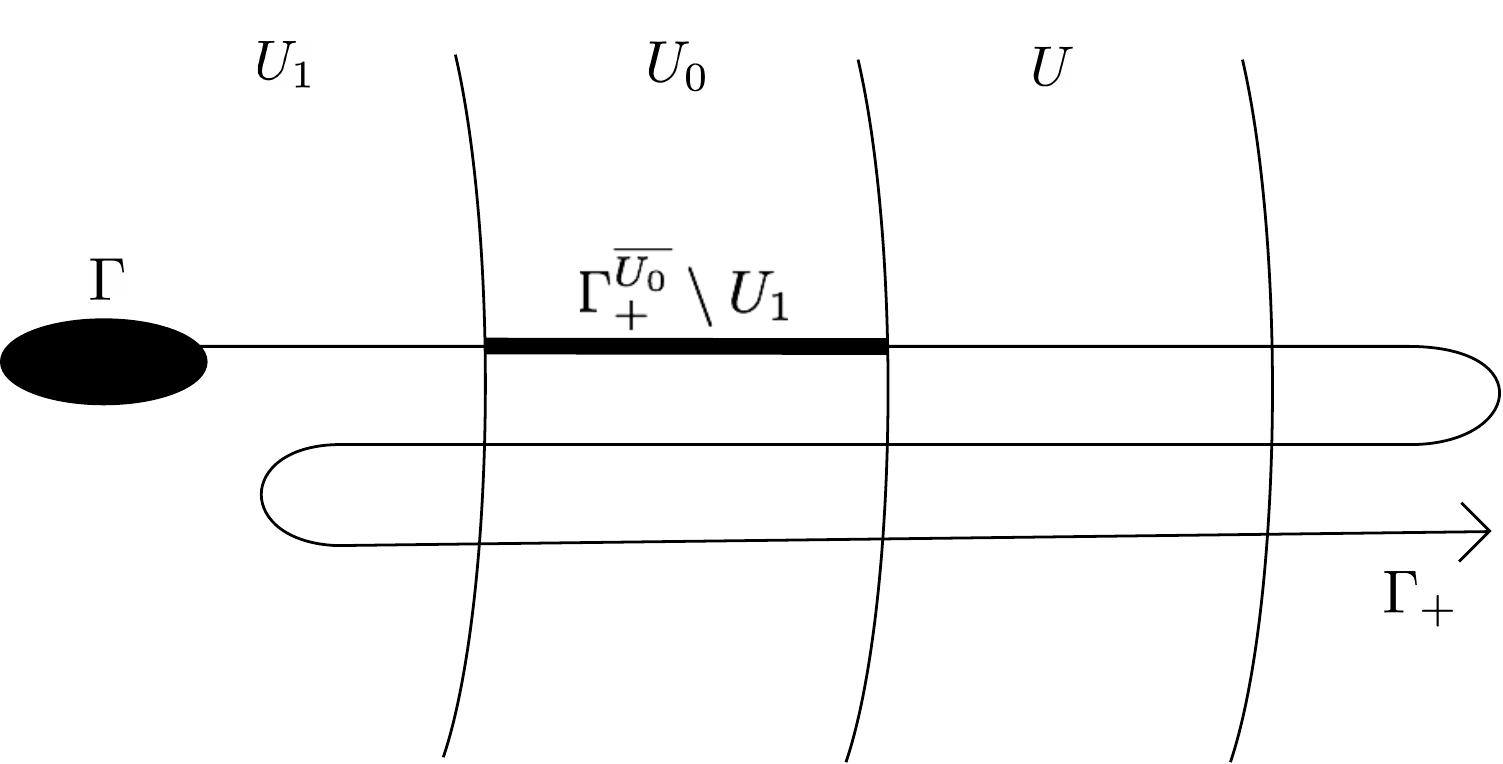}
\caption{We construct $q$ so that it is identically $1$ near $\Gamma$, and then nonincreasing along $\Gamma_+$. We make $q$ strictly decreasing along $\Gamma_+^{\overline{U_0}} \setminus U_1$, and then identically $0$ outside of $U$ (because in this last region Remark~\ref{r:nontrap} provides no information about $u$ so we must not produce any error terms here). Since $q$ must be nonincreasing along $\Gamma_+$ and compactly supported, it must remain $0$ after this point, and in particular  we cannot make $H_p q < 0$ on any of $\Gamma_+ \setminus \Gamma_+^U$.}\label{f:gammazigzag}
\end{figure}

To motivate the statement, we outline how Lemma~\ref{l:escfunc} will be used to prove Theorem~\ref{t:main}. We will see that a positive commutator estimate as in \S\ref{s:special} directly gives us good control of $u$ on $\Gamma_+^{\overline{U_0}} \setminus U_1$, where the commutator is elliptic, up to errors which are of two types. By propagation of singularities we can extend these good estimates to the forward flowout of $\Gamma_+^{\overline{U_0}} \setminus U_1$, namely to $\Gamma_+\setminus U_1$.  The first type of error is in the region away from $\Gamma_+$, where we do not have $H_p q \le 0$, but here we know that $u$ is $\Oh(h^{-1})$ thanks to Remark~\ref{r:nontrap}. The second type of error is in the region where $H_pq \le 0$ but not uniformly bounded away from $0$. We control this error using an iteration as in \S\ref{s:special}.  We will need a finite sequence of $q_j$ (the number of iterations is determined by the polynomial bound on $u$) such that $H_p q_{j+1}< 0$ on $\supp dq_j \cap \Gamma_+$.   To obtain $q_1$ we apply Lemma~\ref{l:escfunc}  with any $U_1, U_0$ satisfying the hypotheses of the lemma. To obtain $q_{j+1}$ from $q_j$ we observe that 
\[\Gamma \subset T^*X\setminus \supp(1-q_j) \subset \supp q_j \subset U,\]
and apply Lemma~\ref{l:escfunc} with a new $U_1,U_0$ such that $U_1 \Subset T^*X \setminus \supp(1-q_j)$ and $\supp q_j \subset U_0$. To simplify notation we will not discuss the iteration in more detail, and will simply use $q$ rather than $q_j$.

\begin{proof}
We will construct a function $\tilde{q}$, smooth in a neighborhood of $\overline{U}$,  satisfying
\[\tilde{q} = 0 \textrm{ near } \Gamma, \quad H_p \tilde{q} \le 0, \quad H_p \tilde{q}|_{ \Gamma_+^{\overline{U_0}}\setminus U_1} < 0, \quad \tilde{q}|_{ \Gamma_+^{\overline{U_0}}\setminus U_1} \ge -1/2, \quad  \tilde{q} \le -2 \textrm{ near } \Gamma_+ \setminus \Gamma_+^{U}.\]
Then we take $f\in C^{\infty}(\R)$ nondecreasing such that $f(t)=t+1$ near $t \ge -1/2$ and $f(t) = 0$ near $t \le -2$. We take further $\chi_q \in C_0^\infty(U;[0,\infty))$ identically $1$ near $\overline{\{\tilde q < 2\}} \cap \Gamma_+$ (note that $\overline{\{\tilde q < 2\}} \cap \Gamma_+ \Subset U$). It then suffices to put
\[q(\rho) \Def \chi_q(\rho) f(\tilde{q}(\rho)).\]
Indeed, that $q$ is nonnegative and identically $1$ near $\Gamma$ is immediate. That $H_p q < 0$ on $\Gamma_+^{\overline{U_0}} \setminus U_1$ follows from the fact that on that set we have $f \circ \tilde{q} = \tilde{q} + 1$ and $\chi = 1$. That $H_p q \le 0$ near $\Gamma_+$ follows from the fact that $\{H_p q>0\} \subset \supp d\chi_q \cap \overline{\{\tilde q < 2\}}$, which is disjoint from $\Gamma_+$.

If $f$ and $\chi_q$ are chosen such that $\sqrt f$ and $\sqrt{\chi_q}$ are smooth, then $\sqrt q$ is smooth. Meanwhile, near $\Gamma_+$, $-H_p q = -(f'\circ \tilde q)H_p \tilde q$, and hence it suffices to make $\sqrt{f'}$ and $\sqrt{-H_p \tilde q}$ smooth. In the case of $f$ it suffices to make $f$ a translation of $e^{-1/t}|_{t >0}$ near the boundary of its support. We will indicate below how to achieve this for $\tilde q$.
 
We take $\tilde{q}$ of the form
\begin{equation}\label{e:q1}\tilde{q} \Def \sum_{k=1}^N q_{\rho_k},\end{equation}
where each $q_{\rho_k}$ is supported near a portion of the bicharacteristic through $\rho_k$, a suitably chosen point in $\Gamma_+^{\overline{U_0}}\setminus U_1$.

To determine the $\rho_k$ we first fix open sets $V_1$ and $V_0 $ with $ \Gamma \subset V_1 \Subset U_1$ and $U_0 \Subset V_0 \Subset U$.  
 We then associate to each  $\rho \in \Gamma^{\overline{U_0}}_+ \setminus U_1$  the following {\em escape times}:
\[T_\rho^{V_1} \Def \inf \{t \in \R: \gamma_\rho(t) \not\in V_1\}, \ T_\rho^{V_0} \Def \inf \{t \in \R: \gamma_\rho(t) \not\in V_0\}, \ T_\rho^U \Def \sup \{t \in \R: \gamma_\rho(t) \in \overline{U}\}.\]
Note that these are finite because of the definition of $\Gamma_+$ and \eqref{e:dynamicassumption}.

Next let $\Sc_\rho$ be a hypersurface through $\rho$ which is transversal to $H_p$ near $\rho$. Then if $U_\rho$ is a sufficiently small neighborhood of $\rho$, the set
\[V_\rho \Def \{\gamma_\alpha(t): \alpha \in U_\rho \cap \Sc_\rho,\, t \in (T_\rho^{V_1} -1,T_\rho^{U} +1)\}\]
is diffeomorphic to $(\Sc_\rho\cap U_\rho) \times (T_\rho^{V_1} -1,T_\rho^{U} +1)$. We use this diffeomorphism to define product coordinates on $V_\rho$. If necessary, shrink $U_\rho$ so that
\[\overline{V_\rho} \cap \{t \le T_\rho^{V_1}\} \cap \overline{U_1} = \emptyset, \qquad \overline{V_\rho} \cap \{t \le T_\rho^{V_0}\} \subset U, \qquad \overline{V_\rho} \cap \{t = T_\rho^{V_0}\} \cap \overline{U_0} = \emptyset.\]
This is possible because $\gamma_\rho(\{t \le T_\rho^{V_1}\}) \cap \overline{U_1} = \emptyset$, $\gamma_\rho(\{t \le T_\rho^{V_0}\}) \subset U$, and $\gamma_\rho(T_\rho^{V_0}) \not\in \overline{U_0}$.

Take $\varphi_\rho \in C_0^\infty(\Sc_\rho \cap U_\rho; [0,1])$ identically $1$ near $\rho$, also considered as a function on $V_\rho$ via the product coordinates, and let $V'_\rho \subset V_\rho$ be an open set  containing $\gamma_\rho([T_\rho^{V_1}-1/2,T_\rho^{U}+1/2])$ such that $\varphi_\rho =1$ on $V_\rho'$. Observe that the $V_\rho'$ with  $\rho \in \Gamma^{\overline{U_0}}_+ \setminus U_1$ are an open cover of $\Gamma_+ \cap \overline{U} \setminus U_1$, because any backward bicharacteristic from a point in $\Gamma_+ \cap \overline{U} \setminus U_1$ enters $\Gamma^{\overline{U_0}}_+ \setminus U_1$ eventually. Now take $\rho_1, \dots, \rho_N$ such that 
\begin{equation}
\label{e:cover} \Gamma_+ \cap \overline{U} \setminus U_1 \subset \bigcup_{k=1}^N V'_{\rho_k}.\end{equation}

For each $\rho \in \{\rho_1, \dots, \rho_N\}$ put
\[q_\rho \Def \chi_\rho\varphi_\rho,\qquad H_pq_\rho = \chi'_\rho\varphi_\rho,\]
where $\chi_\rho \in C^\infty((T_\rho^{V_1} -1,T_\rho^{U} +1))$. 

\begin{figure}[htb]
\includegraphics[width=15cm]{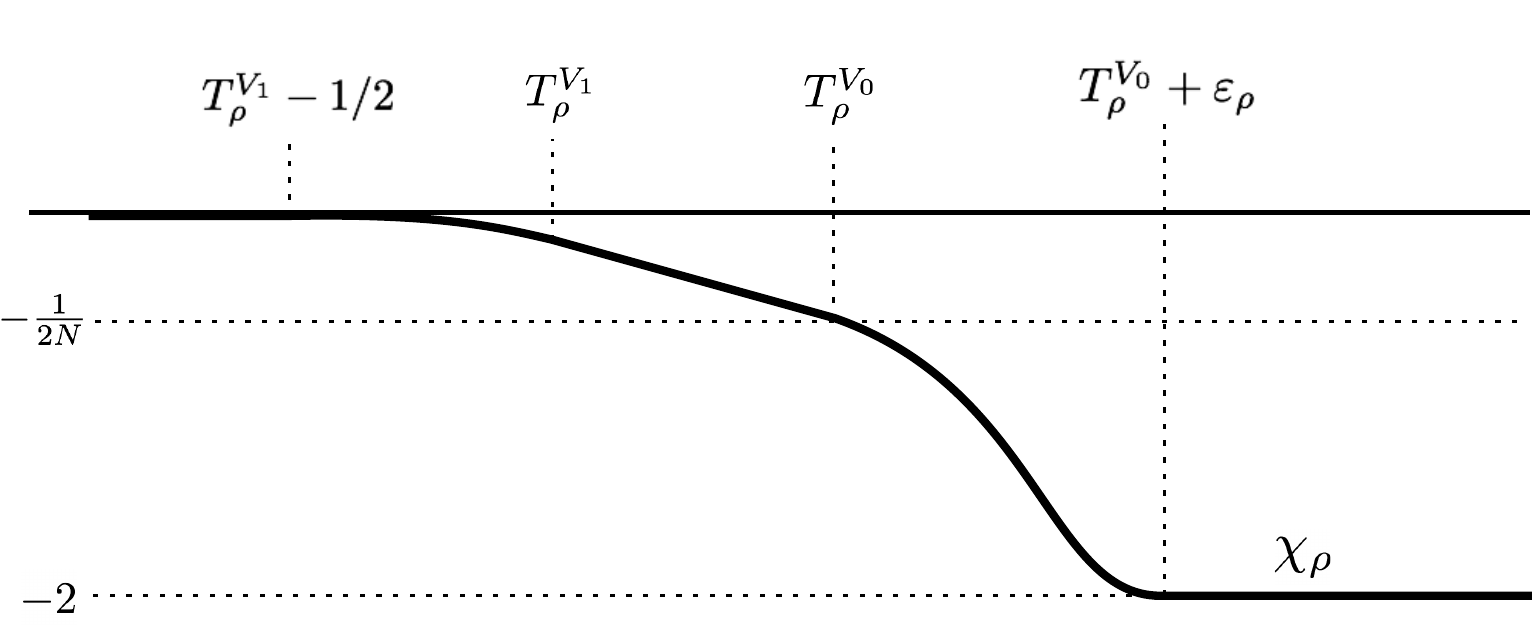}
\caption{The graph of $\chi_\rho$, $\rho \in \{\rho_1,\dots,\rho_N\}$.}\label{f:chirho}
\end{figure}

We further impose that $\chi_\rho$ has $\chi_\rho'(t) \le 0$ for all $t$ and also satisfies 
\begin{enumerate}
\item $\chi_\rho (t) = 0$ for $t \le T_\rho^{V_1} - 1/2$,
\item $\chi_\rho'(t)< 0$, $\chi_\rho(t) \ge -(2N)^{-1}$ for $T_\rho^{V_1} \le t \le T_\rho^{V_0}$,
\item $\chi_\rho(t) = -2$ for $t \ge T_\rho^{V_0} + \eps_\rho$.
\end{enumerate}  
Here $\eps_\rho$ is a positive number less than $1/2$ and small enough that $\gamma_\alpha(t) \in U$ for $\alpha \in U_\rho\cap\Sc_\rho$ and $t \le T_\rho^V + \eps_\rho$.  Such an $\eps_\rho$ exists because $\overline{V_\rho} \cap\{t \le T_\rho^{V_0}\} \subset U$. Note that in condition (2) we use the same $N$ as in \eqref{e:cover}.  Observe that extending $q_\rho$ by  $0$ outside of $V_\rho$ gives a function which is $C^\infty$ near $\overline U$.

We now check that $\tilde{q}$ has the desired properties. That $\tilde{q}=0$ near $\Gamma$ follows from the fact that $\supp \tilde{q} \subset \bigcup V_{\rho_k}$ and each $V_{\rho_k}$ is disjoint from $\Gamma$. That $H_p \tilde{q} \le 0$ follows from $\chi_\rho' \le 0$. That $H_p\tilde{q}<0$ and $\tilde{q} \ge -1/2$ on $\Gamma^{\overline{U_0}}\setminus U_1$ follows from condition (2) on the $\chi_\rho$ and from the covering property \eqref{e:cover}, as well as from the fact that we took care to make $\overline{V_\rho} \cap \{t \ge T_\rho^{V_0}\} \cap \Gamma_+^{\overline{U_0}} = \emptyset$ so none of the summands in \eqref{e:q1} are too negative here. That $\tilde{q} \le -2$ near $\Gamma_+ \setminus \Gamma_+^U$ follows from condition (3) on the $\chi_\rho$ together with \eqref{e:cover}.

To make $\sqrt{-H_p \tilde q}$ smooth, let $\psi(s)=0$ for $s\leq 0$, $\psi(s)=e^{-1/s}$ for $s>0$, and assume as we may that $U_\rho\cap\Sc_\rho$ is a ball with respect to a Euclidean metric (in local coordinates near $\rho$) of radius $r_\rho>0$ around $\rho$. We then choose $\varphi_{\rho}$ to behave like $\psi({r'_\rho}^2-|.|^2)$ with $r'_\rho<r_\rho$ for $|.|$ close to $r'_\rho$, bounded away from $0$ for smaller values of $|.|$, and choose $-\chi'_{\rho}$ to vanish like $\psi$ at the boundary of its support. That  sums of products of such functions have smooth square roots follows from \cite[Lemma 24.4.8]{h}.
\end{proof}

We conclude this section and the proof of Theorem~\ref{t:main} by proving the inductive step in the iteration: if $u$ is $\Oh(h^k)$ on a sufficiently large compact subset of $U \cap\Gamma_+ \setminus \Gamma$, then $u$ is $\Oh(h^{k + 1/2})$ on $\Gamma_+^{\overline U_0} \setminus U_1$, provided $k+ 1/2 \le -1$.

First let $U_-$ be an open neighborhood of $\Gamma_+ \cap \supp q$ which is sufficiently small that $H_pq\le0$ on $U_-$ and that $\sqrt{-H_pq}$ is smooth on $U_-$. Let $U_+$ be an open neighborhood of $\supp q \setminus U_-$ whose closure is disjoint from $\Gamma_+$ and from $T^*X \setminus \overline{U}$. Define $\phi_\pm \in C^\infty(U_+ \cup U_-)$ with $\supp \phi_\pm \subset U_\pm$ and with $\phi_+^2 + \phi_-^2 = 1$ on $U_+ \cup U_-$.

Put
\[b \Def \phi_- \sqrt{-H_pq^2}, \qquad e \Def \phi_+^2 H_pq^2.\]
Let $Q,B,E \in \Psi^{-\infty,0}(X)$ have principal symbols $q,b,e$, and microsupports $\supp q$, $\supp b$, $\supp e$, so that
\[\frac i h [P,Q^*Q] = - B^*B + E + hF,\]
with $F \in \Psi^{-\infty,0}(X)$ such that $\WF_h'F \subset \supp dq \subset U \setminus \Gamma$. But
\[\frac i h \la [P,Q^*Q] u, u \ra = \frac 2 h \im \la Q^*Q(P - \lambda)u, u\ra +\frac 2 h \la Q^*Q \im \lambda u,  u \ra \ge - \Oh(h^\infty) \|u\|^2,\]
where we used $\im \lambda \ge -\Oh(h^\infty)$ and $\supp q \cap \WF_h (P-\lambda)u = \emptyset$. So
\[\|Bu\|^2 \le \la Eu,u \ra + h \la Fu, u \ra + \Oh(h^\infty).\]
But $|\la Eu,u \ra| \le Ch^{-2}$ because $\WF'_hE \cap \Gamma_+ = \emptyset$ allows us to use Remark~\ref{r:nontrap} to conclude that $u$ is $\Oh(h^{-1})$ on $\WF'_hE$. Meanwhile $|\la Fu, u \ra| \le C (h^{-2} + h^{2k})$ because all points of $\WF'_hF$ are either in $U \backslash \Gamma_+$, where we know $u$ is $\Oh(h^{-1})$ from Remark~\ref{r:nontrap}, or on a single compact subset of $U \cap \Gamma_+ \setminus \Gamma$, where we know that $u$ is $\Oh(h^k)$ by inductive hypothesis. Since $b= \sqrt{-H_pq^2}>0$ on $\Gamma_+^{\overline{U_0}}\setminus U_1$, we can use microlocal elliptic regularity to conclude that $u$ is $\Oh(h^{k + 1/2})$ on $\Gamma_+^{\overline U_0} \setminus U_1$, as desired.

\section{Application to resolvent estimates}\label{s:app}

\subsection{Proof of Theorem \ref{t:cor}}\label{s:cor} 

Let $f = \Op(b) v$, $\|v\| = \Oh(1)$, $u = R_h(\lambda)f$.  Let $\Gamma$ be the union of the connected components of the trapped set which intersect the backward bicharacteristic flowout of $\supp a$. Note that if $V$ and $W$ are chosen such that $\overline W$ is disjoint from any other components of the trapped set, then the assumptions of Theorem~\ref{t:main} are satisfied. We must show that for any $\rho \in \supp a$, $u$ is $\Oh(h^{-1})$ at $\rho$. There are three cases.

\begin{enumerate}

\item If $\rho \not\in \Sigma = p^{-1}(I)$, then $u$ is $\Oh(1)$ at $\rho$ by elliptic regularity and the polynomial boundedness of the resolvent.
\item If $\rho$ is backward nontrapped, then $u$ is $\Oh(h^{-1})$ at $\rho$ by the semiclassically outgoing assumption.
\item If $\rho$ is backward trapped, then $\rho \in \Gamma_+$ by the definition of $\Gamma_+$ and by the support property of $a$. Hence $u$ is $\Oh(h^{-1})$ at $\rho$ by Theorem~\ref{t:main}. The assumption in Theorem~\ref{t:main} that $u$ is $\Oh(h^{-1})$ on $\Gamma_-$ follows from case (2) above.

\end{enumerate}

This proves that
\[\|\Op(a)R_h(\lambda)\Op(b) v\| \le Ch^{-1}.\]
The uniformity in $v$ follows from Banach-Steinhaus.

\subsection{Proof of Theorem \ref{t:first}}\label{s:first}

The estimate \cite[(1.5)]{Cardoso-Vodev:Uniform} of Cardoso and Vodev reads, in the notation of Theorem \ref{t:first},
\begin{equation}\label{e:cv}
\|\la r \ra^{-1/2 - \delta}(1 - \chi )(h^2\Delta_g - E - i\eps)^{-1}(1 - \chi )\la r \ra^{-1/2 - \delta}\| \le C h^{-1},
\end{equation}
where $\chi \in C_0^\infty(X)$ is identically $1$ on some (large) compact set. Meanwhile, from Theorem \ref{t:cor} (and using microlocal elliptic regularity) we have
\begin{equation}\label{e:first1}
\| \tilde \chi (1 - \Op(a))(h^2\Delta_g + V - E - i\eps)^{-1}(1 - \Op(a)) \tilde \chi\|\le C h^{-1},
\end{equation}
for any $\tilde \chi \in C_0^\infty(X)$ (see \S\ref{s:inf} for a discussion of the semiclassically outgoing condition in this setting). If we take $\tilde \chi$ to be identically $1$ on a sufficiently large (compact) set, then we can apply the gluing method of \cite{DaVa} to deduce \eqref{e:firstconc} from \eqref{e:cv} and \eqref{e:first1}. Since the proof below follows the proof of \cite[Theorem 2.1]{DaVa} closely we provide only an outline.

After possibly multiplying the boundary defining function $x$ by a large constant, we may assume $\supp V \cup \supp\chi\subset \{x > 4\}$ and that if $\gamma(t)$ is a bicharacteristic of $p_0 = |\xi|_g^2$ in $p_0^{-1}(E)$, then 
\[
\ddot x(\gamma(t)) = 0 \Rightarrow \dot x(\gamma(t)) < 0,
\]
in $\{x > 4\}$. We now take $\tilde \chi$ to be identically $1$ near $\{x \ge 1\}$. Let $\chi_1 \in C^\infty(\R;[0,1])$ be such that $\chi_1 = 1$ near $\{x \ge 3\}$, and $\supp \chi_1 \subset \{x > 2\}$, and let $\chi_0 = 1 - \chi_1$. Define a right parametrix for $P = h^2\Delta_g + V$ by
\[
F \Def \chi_0(x-1) (h^2\Delta_g - \lambda)^{-1}  \chi_0(x) + \chi_1(x+1) (h^2\Delta_g  + V- \lambda)^{-1}\chi_1(x).
\]
We then put
\[\begin{split}
(P - \lambda)F &= \Id + [P,\chi_0(x-1)] (h^2\Delta_g - \lambda)^{-1}  \chi_0(x) + [P,\chi_1(x+1)](h^2\Delta_g  + V- \lambda)^{-1}\chi_1(x) \\&\Def \Id + A_0 + A_1.
\end{split}\]
Now \cite[Lemma 3.1]{DaVa} implies that
\[\|A_0A_1\|_{L^2 \to L^2} = \Oh(h^\infty),\]
so that, using $A_0^2 = A_1^2 = 0$
\[\begin{split}
(P-\lambda)(F - F A_0 - F A_1 + F A_1 A_0) = \Id - A_0 A_1 + A_0 A_1 A_0.
\end{split}\]
Note that the remainder is trivial in the sense that $\|A_0A_1\| + \|A_0A_1A_0\la r \ra^{-1/2 - \delta}\| \le \Oh(h^\infty)$. 
Since \eqref{e:cv} and \eqref{e:first1} imply that 
\[
\|\la r \ra^{-1/2 - \delta}(1 - \Op(a))(F - F A_0 - F A_1 + F A_1 A_0)(1 - \Op(a))\la r \ra^{-1/2 - \delta}\| \le C_1 h^{-1},
\]
this completes the proof.
\subsection{Nontrapping estimates on part of the trapped set}\label{s:contrived}

We now give an  example, although a somewhat unphysical one, in which Theorem~\ref{t:main} can be applied with $\Gamma$ a proper subset of the trapped set but not a connected component. In this example we obtain the nontrapping estimate \eqref{e:ann} for $a$ and $b$ with supports overlapping a certain part of the trapped set. More specifically, we will apply Theorem~\ref{t:main} with $\Gamma$ a union of closed orbits and with part of $\Gamma_+$ or $\Gamma_-$ contained in the trapped set.

Let $y = y(z) \in C^\infty(\R)$ be even, positive-valued, with a nondegenerate local maximum at $0$, and with $y''>0$ outside of a neighborhood of $0$, such that $y''$ changes sign only twice. Let $(X,g)$ be the surface of revolution obtained by revolving the graph of $y$ around the $z$ axis (see Figure~\ref{f:phaseplane}). Suppose this surface is an asymptotically conic or hyperbolic manifold as in \S\ref{s:defnot} (for example, it may be a catenoid outside of a compact set). We will use coordinates $(s,\theta)$ on $X$, where $s=s(z)$ is an arclength parametrization of the graph of $y$ with $s(0)=0$, and $\theta$ measures the angle of revolution. Let $a(s) = y(z)$ and $(\sigma,\mu)$ be dual to $(s,\theta)$. In these coordinates the manifold $(X,g)$ and the geodesic Hamiltonian $p_0$ are given by
\[X = \R_s \times (\R \slash 2\pi \Z)_\theta, \qquad g = ds^2 + a(s)^2 d\theta^2, \qquad p_0 = \sigma^2 + a(s)^{-2} \mu^2.\]
Let $s_0$ be the point in $\{s>0\}$ at which the global minimum of $a$ is attained. The unit speed geodesic flow has six closed orbits along latitude circles: two elliptic orbits at $s=0$ and two hyperbolic orbits at each of $s=\pm s_0$. See Figure~\ref{f:phaseplane} for a sketch of the projection of the bicharacteristic flowlines to the $(s,\sigma)$ plane.
 
\begin{figure}[htb]
\includegraphics{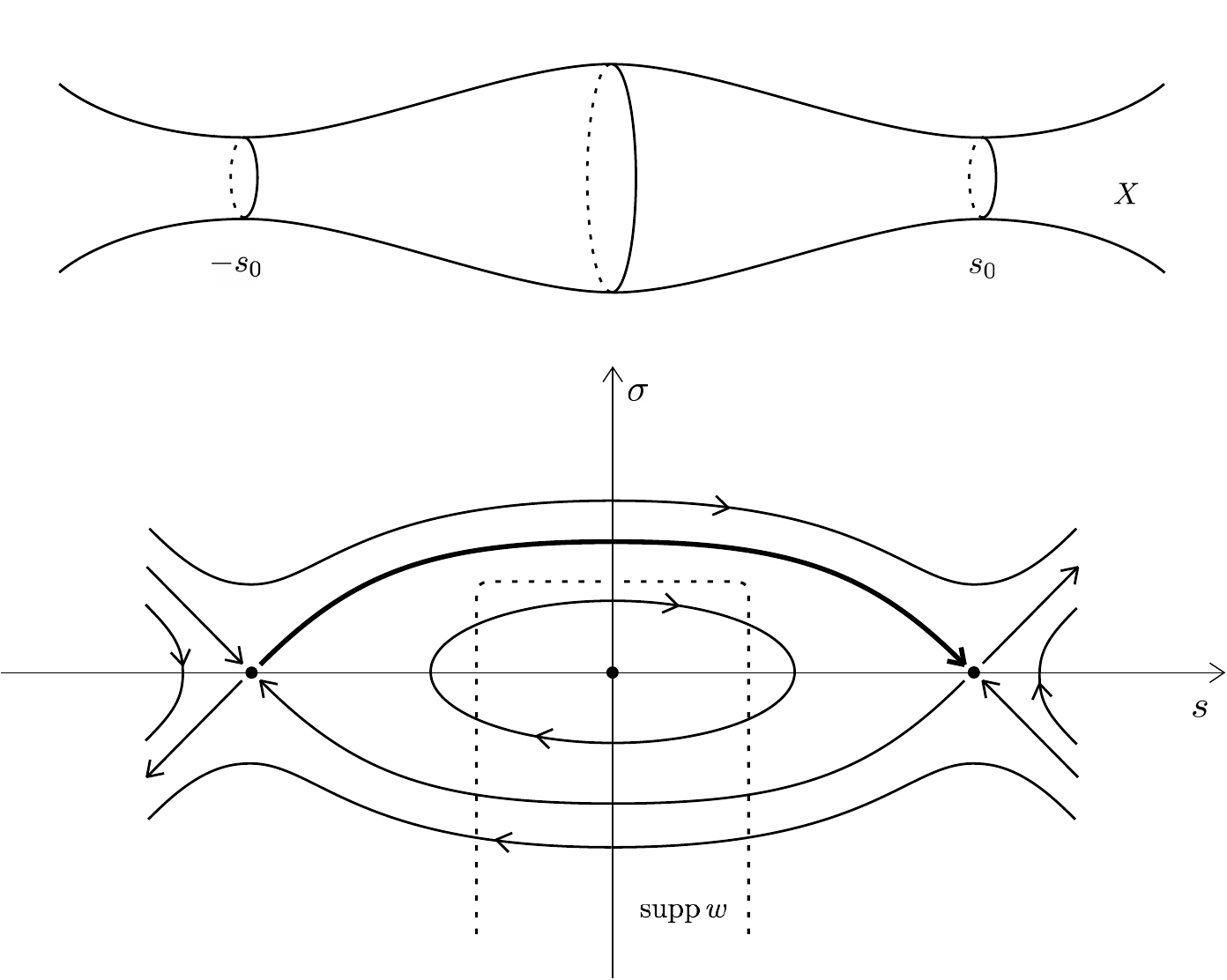}
\caption{The surface of revolution $(X,g)$ with its three geodesic latitude circles, and the unit speed geodesic flow on $S^*X$ projected onto the $(s,\sigma)$ plane. The complex absorbing barrier $w$ is supported inside the dashed outline. In Proposition~\ref{p:contrivedexample} we apply Theorem~\ref{t:main} first with $\Gamma$ taken to be the two hyperbolic closed orbits at $(s,\sigma) = (-s_0,0)$, and then with $\Gamma$ taken to be the orbits at $(s,\sigma) = (s_0,0)$. The darkened arrow is the portion of the trapped set on which we prove a nontrapping resolvent estimate.}\label{f:phaseplane}
\end{figure}

We would like to apply Theorem~\ref{t:main} with $\Gamma$ taken to be one or several of the hyperbolic closed orbits at $s = \pm s_0$. However, the resolvent of the Laplacian on this surface will not be polynomially bounded because of the elliptic trapping, and $\Gamma_-$ in this case will include trapped trajectories on which $\Oh(h^{-1})$ resolvent bounds do not hold, so we introduce a complex absorbing barrier as in \S\ref{s:bar} to suppress some of the trapping. Let $w\in C_0^\infty(T^*X;[0,1])$ be supported as in Figure~\ref{f:phaseplane} and satisfy $w=1$ on $S^*X \cap \{s=0, \, \sigma \le 0\}$. More specifically, we require that $\supp w \subset \{-s_0/2<s<s_0/2\}$, and that $\supp w$ be disjoint from bicharacteristics $\gamma(t)$ with $\lim_{t \to \pm \infty} s(\gamma(t)) = \pm s_0$. Let 
\[P = h^2\Delta_g -1 -iW,\]
where $W \in \Psi^{-\infty,0}(X)$ has principal symbol $w$. In Lemma~\ref{l:polybd} we show that the resolvent of this operator is polynomially bounded. The proof uses microlocal estimates near the hyperbolic orbits originally due to Christianson \cite{c07,c08} together with the gluing method of \cite{DaVa}. To apply the gluing method, we use the following convexity properties of the bicharacteristic flow: If $\gamma(t)$ is a bicharacteristic in $S^*X$, then
\begin{align}\dot s(\gamma(t)) = 0, \,\,\,\,\,\quad\quad \pm s(\gamma(t)) > s_0 \quad &\Rightarrow \quad \pm \ddot s(\gamma(t)) > 0, \label{e:convinf}\\
\dot s(\gamma(t)) = 0, \quad 0 < \pm s(\gamma(t)) < s_0 \quad &\Rightarrow \quad \pm \ddot s(\gamma(t)) <0. \label{e:convcompact}\end{align}

\begin{lem}\label{l:polybd}
For all $\chi_0 \in C_0^\infty(X)$ there exist $C,h_0$ such that
\begin{equation}\label{e:log2}
\|\chi_0(P - \lambda)^{-1}\chi_0\| \le C \frac{\log^2(1/h)}{h}
\end{equation}
for $0<h\le h_0$ and $\re \lambda =0$, $\im \lambda \ge 0$.
\end{lem}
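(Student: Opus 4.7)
The strategy is to combine three microlocal resolvent estimates via the gluing method of \cite{DaVa}. First, at each hyperbolic closed orbit at $s = \pm s_0$, Christianson's theorem \cite{c07,c08} yields a microlocal bound of the form
\[
\|Au\| \le C\frac{\log(1/h)}{h}\|B(P-\lambda)u\| + C\|B'u\|,
\]
where $A,B$ are microlocalized near the orbit, $B$ is elliptic there, and $B'$ is microsupported on the stable/unstable manifolds just outside. Second, on the elliptic set of $W$ the identity $\la Wu,u\ra + \im\lambda\,\|u\|^2 = -\im\la (P-\lambda)u,u\ra \le \|f\|\|u\|$ yields an $\Oh(1)$ microlocal bound on $\{w\ge 1/2\}$. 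Third, from the asymptotically conic or hyperbolic structure and the semiclassically outgoing property one has an $\Oh(1/h)$ nontrapping estimate on any neighborhood of infinity.

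The dynamical picture is as follows. By the convexity relations \eqref{e:convinf} and \eqref{e:convcompact}, together with the hypotheses $w \equiv 1$ on $\Sigma\cap\{s=0,\sigma\le 0\}$ and $\supp w$ disjoint from the stable/unstable manifolds of the hyperbolic orbits, every bicharacteristic in $\Sigma = p_0^{-1}(1)$ whose projection to $X$ is precompact must either tend to one of the hyperbolic orbits, escape to infinity, or enter the elliptic region of $W$. In particular, after absorption, the effectively trapped set reduces to the hyperbolic orbits together with a finite family of heteroclinic trajectories between them, namely those crossing $\{s=0\}$ with $\sigma>0$ and thus escaping $\supp w$.

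To assemble the estimate, I would write an approximate resolvent $(P-\lambda)^{-1}$ as a sum of microlocal resolvents supported respectively near infinity, on the elliptic set of $W$, and on small neighborhoods of each hyperbolic orbit, in the spirit of the parametrix construction of \cite{DaVa}. Propagation of singularities and the convexity relations transport the microlocal estimates along the flow: the error term $B'u$ from Christianson at one orbit is controlled by the nontrapping $\Oh(1/h)$ estimate if the corresponding leaf escapes to infinity, by the elliptic $\Oh(1)$ estimate if it enters $\{w\ge c\}$, or by Christianson's estimate at the other hyperbolic orbit via a heteroclinic. The last case composes two $\Oh(\log(1/h)/h)$ estimates, which is the source of the $\log^2(1/h)/h$ bound.

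The main obstacle is controlling the heteroclinic case so as to obtain exactly two $\log$ factors and not more. This is achieved by choosing the microlocal neighborhoods of $\Gamma^+$ and $\Gamma^-$ to be of comparable size, and by using that any bicharacteristic not lying on one of the hyperbolic orbits themselves crosses $\supp w$ or exits every compact set in bounded time once it leaves the union of small neighborhoods of the orbits, so that the composition of microlocal estimates terminates after one application of each Christianson bound.
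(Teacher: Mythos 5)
Your strategy matches the paper's in essentials: both arguments glue microlocal estimates \`a la \cite{DaVa}, and both identify the heteroclinic trajectories (crossing $\{s=0\}$ with $\sigma>0$ and hence escaping $\supp w$) as the reason that two logarithmic losses accumulate. The decompositions differ somewhat. You propose to glue a microlocal Christianson estimate at each hyperbolic orbit, an elliptic estimate on $\{w\ge c\}$, and a nontrapping estimate near infinity, all at once. The paper instead packages the absorbing and trapping pieces into two global model operators $P_0=P-iW_0$ (absorbing near infinity) and $P_1=P-iW_1$ (absorbing near $s=0$), proves $\|R_0\|=\Oh(h^{-1})$ and $\|R_1\|=\Oh(h^{-1}\log(1/h))$ (the latter by invoking \cite[Theorem 2.1]{DaVa} as a black box), and iterates a two-model parametrix; the $\log^2$ factor appears because $\|A_0A_1\|\ne\Oh(h^\infty)$ so the iteration picks up two copies of $R_1$. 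Your finer decomposition is closer to the dynamics but would require a more delicate wavefront-set bookkeeping to show the iteration terminates, which the paper sidesteps by its two-model setup.

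There is one genuine inconsistency in your write-up. You state Christianson's estimate in the form $\|Au\|\le C\,h^{-1}\log(1/h)\,\|B(P-\lambda)u\|+C\|B'u\|$, with no loss on the propagation term $\|B'u\|$. If that were the correct form, composing it across a heteroclinic with the estimate at the other orbit and then with the $\Oh(h^{-1})$ nontrapping bound would produce only $\Oh(h^{-1}\log(1/h))$, not $\Oh(h^{-1}\log^2(1/h))$ — you would in fact be proving the sharper bound that the paper explicitly states it does not know how to prove (``It is natural to conjecture that $\log^2(1/h)$ could be improved to $\log(1/h)$''). The correct accounting, as revealed by the parametrix iteration, carries a $\log(1/h)$ factor on the $\|B'u\|$ term (coming from the $\Oh(h)$ commutator hitting the $\Oh(h^{-1}\log(1/h))$ model resolvent), so that the composition yields $\log^2(1/h)/h$ as claimed. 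Your stated estimate and your claimed conclusion are therefore incompatible; you should either verify that the no-loss propagation form is not available (it is not, with current techniques — that is precisely Theorem~\ref{t:main} of this paper, which requires $f$ to vanish microlocally near the trapped set, not merely to be $\Oh(1)$ there) and correct the loss factor, or explain why your version would give the improved bound.
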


It is natural to conjecture that $\log^2(1/h)$  could be improved to $\log(1/h)$ in \eqref{e:log2}. This is the (optimal) bound obtained in \cite{c07,c08,Nonnenmacher-Zworski:Quantum, wz} in various settings where there is hyperbolic trapping. For example if one had the analogue of \cite[Theorem A]{bz}  or \cite[(1.6)]{c07} for the model operator $P_1$ in the proof below, the parametrix construction would give this bound

\begin{proof}
We define two model operators: a nontrapping model $P_0$ and a trapping model $P_1$. Unlike in the usual setup, the nontrapping model is ``compact'' in the sense that it agrees with $P$ only for small values of $s$, while the region near $\{|s| \ge s_0\}$ is suppressed by a complex absorbing barrier.  Meanwhile the trapping model is ``noncompact'' in the sense that it agrees with $P$ outside a small neighborhood of $\{|s| = 0\}$, and only the region near $\{|s| = 0\}$ is suppressed. For the resolvent of $P_0$ we prove an $\Oh(h^{-1})$ bound (this is standard), and for the resolvent of $P_1$ we prove an $\Oh(h^{-1}\log(h^{-1}))$ bound (for this we use \cite[Theorem~2.1]{DaVa}), after which an $\Oh(h^{-1}\log^2(h^{-1}))$ bound for $P$ follows by a slightly more complicated version of the parametrix construction of \cite[\S3]{DaVa}.

More concretely, let $W_0 \in C^\infty(X;[0,1])$ be $0$ for $|s| \le 5s_0/7$ and $1$ for $|s| \ge 6s_0/7$. Let $W_1 \in C^\infty(X;[0,1])$ be $0$ for $|s| \ge 2s_0/7$ and $1$ for $|s| \le s_0/7$.

\begin{figure}[htb]
\includegraphics{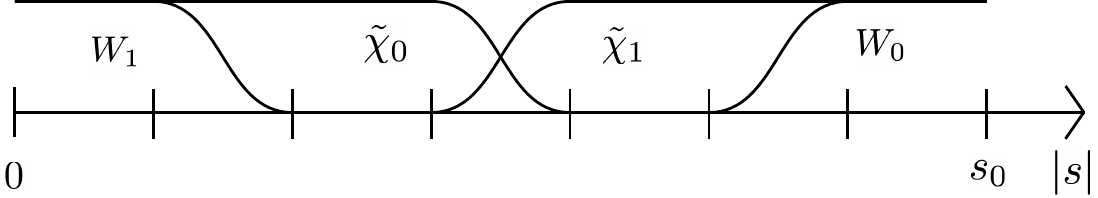}
\caption{The complex absorbing barriers and cutoffs of Lemma~\ref{l:polybd}.}\label{f:barriersandcutoffs}
\end{figure}

Then put
\[P_j \Def P - iW_j, \qquad X_j \Def X\setminus \supp W_j, \qquad P_j|_{X_j} = P|_{X_j}, \qquad j \in \{0,1\},\]
and let $R_j(\lambda) \Def (P_j-\lambda)^{-1}$. By the discussion in \S\ref{s:bar} we have
\[\|\chi_0R_0(\lambda)\chi_0\| \le C h^{-1},\]
because all backward bicharacteristics enter $\{W_0 = 1\}$.
Meanwhile the trapping in $P_1$ consists of four isolated closed hyperbolic orbits, and hence
\[\|\chi_0R_1(\lambda)\chi_0\| \le C \frac{\log(1/h)}h,\]
by \cite[Theorem 2.1]{DaVa}, where we used the convexity conditions \eqref{e:convinf} and \eqref{e:convcompact} to glue a trapping estimate near the hyperbolic orbits (such as \cite[(1.1)]{wz}) to a nontrapping estimate for the infinite end (such as \cite[(1.6)]{Cardoso-Popov-Vodev:Semiclassical}). Note that by the discussion in \S\ref{s:outgoing}, the resolvents of both $P_0$ and $P_1$ are semiclassically outgoing. Now let $\tilde \chi_0 \in C^\infty(\R;[0,1])$ be $1$ near $|s| \le 3s_0/7$ and $0$ near $|s| \ge 4s_0/7$, let $\tilde \chi_1 = 1 -\tilde \chi_0$, and define a right parametrix for $P$ by
\[F \Def \tilde\chi_0(s-s_0/7) R_0(\lambda)\tilde \chi_0(s) + \tilde \chi_1(s+s_0/7) R_1(\lambda) \tilde \chi_1(s).\]
An iterated construction using this $F$, as in \cite[\S3]{DaVa} (or as in \S\ref{s:first} above), gives \eqref{e:log2}. More specifically,  put
\[\begin{split}
(P - \lambda)F &= \Id + [P, \tilde\chi_0(s-s_0/7) ]R_0(\lambda)  \tilde \chi_0(s)+ [P,\tilde \chi_1(s+s_0/7) ]R_1(\lambda)\tilde \chi_1(s) \\&\Def \Id + A_0 + A_1.
\end{split}\]
Although we have $A_0^2 = A_1^2 = 0$ as before, $\|A_0A_1\| \ne \Oh(h^\infty)$ because there are bicharacteristics which pass from $\supp \tilde \chi_1$ to $\supp d \tilde \chi_1(s+s_0/7)$ to $\supp \tilde \chi_0(s-s_0/7)$. We accordingly iterate the parametrix three more times, writing
\[\begin{split}
(P  - \lambda) F(\Id -  A_0 -  A_1 +  A_1 A_0 +  A_0 A_1 - A_0A_1A_0 -  A_1A_0A_1) \\= \Id -  A_1A_0 A_1 A_0 - A_0A_1 A_0 A_1,
\end{split}\]
The remainder is trivial in the sense that
\[\|A_1A_0A_1A_0 \chi_0\| + \|A_0A_1A_0A_1\| = \Oh(h^\infty),\]
and our parametric obeys the estimate
\[
\|\chi_0F(\Id -  A_0 -  A_1 +  A_1 A_0 +  A_0 A_1 - A_0A_1A_0 -  A_1A_0A_1)\chi_0\| \le C \frac{\log^2(1/h)}h,
\]
completing the proof of \eqref{e:log2}.
\end{proof}

Now by the discussion in \S\ref{s:outgoing} the resolvent $(P-\lambda)^{-1}$ is semiclassically outgoing, and recall that trajectories which intersect  $\{w=1\}$ at some negative time are considered backward nontrapped, allowing us to apply Theorem 1 with $\Gamma$ taken to be one or several of the hyperbolic closed orbits at $s = \pm  s_0$. For example, we have the following statement:

\begin{prop}\label{p:contrivedexample}
For all $\chi_1 \in C_0^\infty(X)$ with $\supp \chi_1 \cap \{|s| = s_0\} = \emptyset$, there exist $C,h_0$ such that
\[\|\chi_1(P - \lambda)^{-1}\chi_1\| \le \frac C h\]
for $0<h\le h_0$ and $\re \lambda =0$, $\im \lambda \ge 0$.
\end{prop}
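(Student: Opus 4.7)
The plan is to adapt the proof of Theorem~\ref{t:cor} to this two-barrier setting. Set $f = \chi_1 v$ with $\|v\| = \Oh(1)$ and $u = (P-\lambda)^{-1}f$, which is polynomially bounded by Lemma~\ref{l:polybd}; an appeal to Banach-Steinhaus at the end reduces the problem to showing $u = \Oh(h^{-1})$ at each $\rho \in T^*\supp\chi_1$. Off the characteristic set $\Sigma$, microlocal elliptic regularity gives $u = \Oh(1)$. For points backward nontrapped with respect to $p - iw$ — with the convention, as in \S\ref{s:bar}, that a point whose backward bicharacteristic meets $\{w > 0\}$ is counted as backward nontrapped — the semiclassically outgoing property (see \S\ref{s:outgoing}) gives $u = \Oh(h^{-1})$. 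Since $\chi_1$ vanishes near $\{|s|=s_0\}$, $T^*\supp\chi_1$ is disjoint from the hyperbolic orbit pairs $\Gamma^{(1)}$ at $s=-s_0$ and $\Gamma^{(2)}$ at $s=+s_0$, so any remaining backward trapped $\rho \in T^*\supp\chi_1 \cap \Sigma$ must lie in $\Gamma^{(1)}_+ \cup \Gamma^{(2)}_+$.

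The approach is to apply Theorem~\ref{t:main} twice in sequence. \emph{First, with $\Gamma = \Gamma^{(1)}$:} choose $V \Subset W$ small neighborhoods of $\Gamma^{(1)}$ with $\overline W$ disjoint from $\Gamma^{(2)}$, $\supp w$, and $T^*\supp\chi_1$; then $f$ vanishes microlocally on $W$, and the dynamical isolation \eqref{e:dynamicassumption} follows from the hyperbolic picture near an isolated closed orbit. The critical input is the bound $u = \Oh(h^{-1})$ on $W \cap \Gamma^{(1)}_- \setminus \overline V$. The stable manifold of $\Gamma^{(1)}$ meets $W$ in two branches: the one incoming from $s \to -\infty$ is backward nontrapped by escape to infinity, and the tail of the left-moving heteroclinic from $\Gamma^{(2)}$ to $\Gamma^{(1)}$ lies in $W$ at $\{s < 0,\, \sigma < 0\}$, so its backward flow increases $s$ through $0$ and crosses $\{s=0,\,\sigma\le 0\} \subset \{w=1\}$, making it backward nontrapped as well. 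Theorem~\ref{t:main} then yields $u = \Oh(h^{-1})$ on $W \cap \Gamma^{(1)}_+ \setminus \Gamma^{(1)}$, and propagation of singularities along the forward flow extends this to all of $\Gamma^{(1)}_+ \setminus \Gamma^{(1)}$.

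\emph{Second, with $\Gamma = \Gamma^{(2)}$:} repeat the setup with $V \Subset W$ now small neighborhoods of $\Gamma^{(2)}$. The only new feature is that the stable manifold of $\Gamma^{(2)}$ inside $W$ contains the right-moving heteroclinic from $\Gamma^{(1)}$; in $W$ this branch lies in $\{s > 0,\, \sigma > 0\}$, so its backward flow crosses $s = 0$ with $\sigma > 0$ and misses $\supp w$, tending instead to $\Gamma^{(1)}$, which places it in $\Gamma^{(1)}_+ \setminus \Gamma^{(1)}$. But this is exactly where the first application delivered $u = \Oh(h^{-1})$, so the hypothesis of Theorem~\ref{t:main} is again satisfied. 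The conclusion is $u = \Oh(h^{-1})$ on $\Gamma^{(2)}_+ \setminus \Gamma^{(2)}$, completing the coverage of $T^*\supp\chi_1$.

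The main obstacle is the geometric verification: one must show that the prescribed location of $\supp w$ (from the paragraph preceding Lemma~\ref{l:polybd}), combined with the convexity \eqref{e:convcompact}, really does force the left-moving heteroclinic tail in $W$ to cross $\{w=1\}$ in backward time before it approaches $\Gamma^{(2)}$, while the right-moving heteroclinic avoids $\supp w$ entirely. This asymmetry is precisely what decouples the two applications, letting the first stand alone and then feed the second. Once it is in place, the two-step argument runs in parallel to the proof of Theorem~\ref{t:cor}.
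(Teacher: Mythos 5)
Your proof is correct and follows essentially the same route as the paper's: reduce to pointwise $\Oh(h^{-1})$ bounds on $T^*\supp\chi_1$, dispose of elliptic and backward-nontrapped points, then apply Theorem~\ref{t:main} twice — first with $\Gamma$ the orbit pair at $s=-s_0$ and then at $s=+s_0$, with the first application supplying the $\Gamma_-$ hypothesis of the second. The geometric fact you flag as the main obstacle is built into the paper's choice of $w$ (namely $w=1$ on $\{s=0,\sigma\le 0\}$ and $\supp w$ disjoint from the right-moving heteroclinics), so it holds by construction; the paper leaves this verification implicit while you make it explicit, but the logic is the same.
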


\begin{proof} We closely follow \S\ref{s:cor}. Let $f = \chi_1 v$, $\|v\| = \Oh(1)$, $u = (P-\lambda)^{-1}f$.  We must show that for any $\rho \in T^*\supp \chi_1$, $u$ is $\Oh(h^{-1})$ at $\rho$. There are four cases.
\begin{enumerate}
\item If $\rho \not\in \Sigma = p^{-1}(0)$, then $u$ is $\Oh(1)$ at $\rho$ by elliptic regularity and the polynomial boundedness of the resolvent.
\item If $\rho$ is backward nontrapped (i.e. either escapes to infinity or enters the interior of the support of $w$), then, because $(X,g)$ is asymptotically conic or hyperbolic, $u$ is $\Oh(h^{-1})$ at $\rho$ by the discussion in \S\ref{s:bar} and \S\ref{s:inf}.
\item If $\lim _{t \to -\infty} s(\gamma_\rho(t)) = -s_0$, then $u$ is $\Oh(h^{-1})$ at $\rho$ by Theorem~\ref{t:main} applied with $\Gamma$ the union of the two closed orbits at $s = -s_0$. The assumption in Theorem~\ref{t:main} that $u$ is $\Oh(h^{-1})$ on $\Gamma_-$ follows from case (2) above.
\item If $\lim _{t \to -\infty} s(\gamma_\rho(t)) = s_0$, then $u$ is $\Oh(h^{-1})$ at $\rho$ by Theorem~\ref{t:main} applied with $\Gamma$ the union of the two closed orbits at $s = s_0$. The assumption in Theorem~\ref{t:main} that $u$ is $\Oh(h^{-1})$ on $\Gamma_-$ follows from cases (2) and (3) above.
\end{enumerate}
This proves that
\[\| \chi_1(P - \lambda)^{-1}\chi_1v\| \le Ch^{-1}.\]
The uniformity in $v$ follows from Banach-Steinhaus.
\end{proof}

\section{Semiclassically outgoing resolvents}\label{s:outgoing}
In this section we discuss the assumption that the resolvent family is semiclassically outgoing. As mentioned above, this condition replaces any explicit assumptions about the structure of the manifold near infinity and allows us to work in an arbitrarily small neighborhood of the trapped set. In \S\ref{s:bar} we explain this condition in the case of a polynomially bounded resolvent with a complex absorbing barrier added, a convenient simple model of infinity used to study resolvents in trapping geometries. In \S\ref{s:inf} we consider manifolds which are asymptotically conic or asymptotically hyperbolic in the sense of \S\ref{s:defnot}. Finally, in \S\ref{s:3body} we give an example from $3$-body scattering, illustrating that this assumption is flexible in the sense that it can hold on a manifold whose natural compactification is a manifold with corners rather than a manifold with boundary, and which is not covered by the analysis of \cite{Cardoso-Vodev:Uniform, Cardoso-Popov-Vodev:Semiclassical}. Introducing a suitable short-range three-particle interaction in this setting can produce a hyperbolic trapped set to which Theorem \ref{t:main} can be applied.

In all the examples discussed in this section, the semiclassically outgoing condition with a quantified $h^{-1}$ loss follows from the proof of the same condition without the quantified loss. This weaker condition is discussed in \cite{DaVa} for several of the examples below, and since no significant changes are needed we omit many details.

\subsection{Complex absorbing barriers}\label{s:bar} The simplest setting in which the resolvent is semiclassically outgoing is when ``infinity is suppressed'' by a \textit{complex absorbing barrier}, which we denote by adding a term of the form $-iW$ to a Schr\"odinger operator. See \cite{Nonnenmacher-Zworski:Quantum} and \cite{wz} for examples of theorems about resolvent estimates in the presence of trapping which are simplified in this setting, and see \cite{DaVa} for a general method for gluing in another (more interesting) semiclassically outgoing infinity once such an estimate is proved. This method is used in the present paper to construct the example in \S\ref{s:app}.

The following lemma is standard, and the proof is essentially the same as that of \cite[Lemma 5.1]{DaVa}.

\begin{lem}
Let $(X,g)$ be a complete Riemannian manifold, let $P_0 = h^2\Delta_g + V$ be a semiclassical Schr\"odinger operator with $V \in C^\infty(X)$, let $P = P_0 - iW$ where $W = \Op(w)$ and $w \in C^\infty(T^*X;[0,1])$ is identically $1$ off a compact subset of $T^*X$, and let $I \subset \R$ be compact. Suppose $R_h(\lambda) \Def (P-\lambda)^{-1}$, is polynomially bounded for $\lambda \in D \subset \{\re \lambda \in I, \im \lambda \ge -\Oh(h^\infty)\}$. Then $R_h(\lambda)$ is semiclassically outgoing for $\lambda \in D$.
\end{lem}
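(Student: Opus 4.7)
The plan is to verify the semiclassically outgoing condition directly: given $\rho$ on a backward nontrapped bicharacteristic with $f = (P - \lambda)u$ being $\Oh(1)$ on the backward flowout of $\rho$, I need to conclude that $u$ is $\Oh(h^{-1})$ at $\rho$. The strategy splits into three steps: first, locate a point $\rho'$ on the backward flowout where the absorbing term $W$ makes $P - \lambda$ semiclassically elliptic; second, apply elliptic regularity there; third, propagate the resulting bound forward along the bicharacteristic from $\rho'$ to $\rho$.

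For the first step, I would use the hypothesis that $w$ is identically $1$ off a compact subset of $T^*X$. In either case of the definition of backward nontrapped, the backward flowout must enter $\{w > 0\}$: in the first case this is direct, and in the second case the trajectory escapes every compact set, so for $t$ sufficiently negative $\gamma_\rho(t) \in T^*X \setminus K \subset \{w = 1\}$ for any compact $K$ containing $\{w < 1\}$. Hence there exists $T > 0$ with $\rho' \Def \gamma_\rho(-T)$ satisfying $w(\rho') > 0$.

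For the second step, the semiclassical principal symbol of $P - \lambda$ is $p - iw - \lambda$, and at $\rho'$ we have
\[
|p(\rho') - iw(\rho') - \lambda| \ge w(\rho') - |\im \lambda| \ge w(\rho')/2
\]
for $h$ small, using $\im \lambda \ge -\Oh(h^\infty)$. Thus $P - \lambda$ is semiclassically elliptic at $\rho'$, and the microlocal elliptic regularity statement from \S\ref{s:defnot}, combined with the polynomial bound on $u$ and the $\Oh(1)$ bound on $f$ near $\rho'$, yields $u \in \Oh(1)$ at $\rho'$.

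For the third step, I would apply the propagation of singularities statement of \S\ref{s:defnot} to the operator $P$. The bicharacteristics here are those of $p$, since the absorbing term contributes only to the imaginary part of the semiclassical principal symbol and does not alter $H_p$. Since $\gamma_{\rho'}([0,T]) = \gamma_\rho([-T,0])$ is contained in the backward flowout of $\rho$, the hypothesis gives $f \in \Oh(h^0)$ along this segment; combined with $u \in \Oh(h^{-1})$ at $\rho'$ (weaker than the $\Oh(1)$ bound just obtained), the propagation statement applied with $k = -1$ yields $u \in \Oh(h^{-1})$ at $\gamma_{\rho'}(T) = \rho$, as desired. The main subtle point is that propagation of singularities must be valid for $P_0 - iW$ and not merely for $P_0$; this is standard, since $W \ge 0$ only contributes a favorable sign to the positive commutator estimate underlying forward propagation, so the absorbing term can only improve bounds and never obstruct them.
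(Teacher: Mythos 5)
Your proof is correct and is the standard argument the paper has in mind: locate a point $\rho'$ on the backward flowout where the absorbing term makes $P-\lambda$ elliptic, obtain an $\Oh(1)$ bound there by microlocal elliptic regularity, then propagate forward (with a favorable sign from $W\ge 0$) to get $\Oh(h^{-1})$ at $\rho$. The paper itself gives no proof and only defers to \cite[Lemma 5.1]{DaVa}; your three-step outline matches that argument. One small slip: the chain $w(\rho')-|\im\lambda|\ge w(\rho')/2$ can fail when $\im\lambda$ is positive and not small; the clean estimate is $|p_0(\rho')-iw(\rho')-\lambda|\ge |w(\rho')+\im\lambda| = w(\rho')+\im\lambda \ge w(\rho')-\Oh(h^\infty)\ge w(\rho')/2$, using $\im\lambda\ge -\Oh(h^\infty)$ directly (a positive $\im\lambda$ only improves the bound). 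This does not affect the validity of the argument.
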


In applications, $w$ is often chosen to be identically $0$ near $\Gamma$, and the assumption on $w$  is often replaced by the assumption that $w \in C^\infty(X;[0,1])$ with $w$ identically $1$ off a compact subset of $X$.

\subsection{Asymptotically conic and hyperbolic manifolds}\label{s:inf}
On an asymptotically conic manifold (see \S\ref{s:defnot} for a definition), the semiclassically outgoing assumption  follows from the construction and estimates of \cite{vz}: see \cite[Lemma 2]{d} for a very similar statement.  On an even asymptotically hyperbolic manifold (see \S\ref{s:defnot} for a definition) the semiclassically outgoing property is proved in \cite[Theorem 4.3]{v1} (see also \cite[Theorem 5.1]{v2}).

Another approach is possible in the case when $(X,g)$ is asymptotically hyperbolic and satisfies the additional assumptions that each connected component of $\D \overline{X}$ is a sphere and that
\[g = g_\HH + \tilde g,\]
where $g_\HH$ is a symmetric cotensor which agrees with the hyperbolic metric on $\HH^n$ in a neighborhood of each connected component $\D \overline{X}$, and $\tilde g$ is a symmetric cotensor smooth up to $\D \overline{X}$.  Namely, one can use an argument similar to that in \cite[\S4.2]{DaVa} and derive the semiclassically outgoing property from a description given in \cite{Melrose-SaBarreto-Vasy:Semiclassical} of the Schwartz kernel of the resolvent as a paired Lagrangian distribution, to which a semiclassical version of \cite[Theorem 3.3]{gu} can be applied. 

\subsection{An example from $3$-body scattering}\label{s:3body} Consider the following $3$-body Hamiltonian on $\R^3$:
\[P_0 = -h^2 \D_{x_1}^2 - h^2  \D_{x_2}^2 - h^2  \D_{x_3}^2 + V(x_1-x_2) + V(x_2-x_3) + V(x_3-x_1) - 1,\]
where $V \in C_0^\infty(\R)$. The particles here are constrained to move on a line, and $V$ is the interaction potential between each pair of them.  Passing to center of mass coordinates, we obtain the following reduced Hamiltonian on the plane $X = \{x_1 + x_2 + x_3 = 0\}$:
\[P = -h^2 \Delta + \pi_1^*V + \pi_2^*V + \pi_3^*V - 1,\]
where $\pi_1$ is the projection $(x_1,x_2,x_3)\mapsto x_1 - x_2$, and similarly for $\pi_2$ and $\pi_3$. Note that even when $V$ is small, the perturbation is very long range (and consequently not covered by \cite{Cardoso-Vodev:Uniform, Cardoso-Popov-Vodev:Semiclassical}), and it cannot be extended smoothly to a compactification $\overline{X}$ of $X$ unless $\overline{X}$ is a manifold with corners.

In \cite{Gerard:Semiclassical}, G\'erard shows that if $V$ is classically nontrapping (for example it suffices to take $V$ small) then the resolvent obeys the standard nontrapping bound:
\[\|\chi R_h(\lambda) \chi \| \le C h^{-1},\]
for $0<h\le h_0$, $|\re \lambda| \le \eps_0<1$, $\im \lambda \ge 0$. Moreover the methods of the paper, more explicitly elaborated by Wang \cite{Wang:Semiclassical} in the general $N$-body setting, imply that the resolvent is semiclassically outgoing \cite[(1.8)]{Wang:Semiclassical}.  More specifically, Wang shows that for $f$ compactly supported, $R_h(\lambda) f$ is $\Oh(h^\infty)$ near spatial infinity, where the radial momentum is negative. If $V$ is nontrapping, any backward bicharacteristic eventually enters this region, so the semiclassically outgoing condition follows from propagation of singularities.

\end{document}